\newcommand{\dss}{\displaystyle}
\newcommand{\tss}{\textstyle}
\def\elsartstyle{%
    \def\normalsize{\@setfontsize\normalsize\@xiipt{14.5}}
    \def\small{\@setfontsize\small\@xipt{13.6}}
    \let\footnotesize=\small
    \def\large{\@setfontsize\large\@xivpt{18}}
    \def\Large{\@setfontsize\Large\@xviipt{22}}
    \skip\@mpfootins = 18\p@ \@plus 2\p@
    \normalsize
} \@ifundefined{square}{}{} \makeatother
\newtheorem{theorem}{Theorem}[section]
\newtheorem{lemma}[theorem]{Lemma}
\theoremstyle{definition}
\theoremstyle{remark}
\def\ps@pprintTitle{%
  \let\@oddhead\@empty
  \let\@evenhead\@empty
  \let\@oddfoot\@empty
  \let\@evenfoot\@oddfoot
}
\def\ps@pprintTitle{%
  \let\@oddhead\@empty
  \let\@evenhead\@empty
  \def\@oddfoot{\reset@font\hfil\thepage\hfil}
  \let\@evenfoot\@oddfoot
}
\journal{TBA}
\begin{document}

\begin{frontmatter}

\title{An efficient solution procedure 
for solving \\[0.3ex] 
higher-codimension Hopf and Bogdanov-Takens bifurcations$^\dag$
\footnote{$\!\!^\dag$The first draft of this article has been posted on
arXiv.org since August 28, 2022, No. 2208.13228v1.}
}


\author[mymainaddress]{Bing Zeng}
\ead{zengbing969@163.com}
\author[mysecondaryaddress]{Pei Yu}
\ead{pyu@uwo.ca}
\author[mythirdaryaddress]{Maoan Han\corref{mycorrespondingauthor}}
\cortext[mycorrespondingauthor]{Corresponding author}
\ead{mahan@zjnu.edu.cn}
\address[mymainaddress]{School of Mathematics and Statistics, 
Lingnan Normal University,\\
Zhanjiang, Guandong, 524048, China \vspace*{0.05in}}
\address[mysecondaryaddress]{Department of Mathematics,
Western University, \\ 
London, Ontario, N6A 5B7, Canada \vspace*{0.05in}}
\address[mythirdaryaddress]{Department of Mathematics,    
Zhejiang Normal University,\\
Jinhua, Zhejiang, 321004, China}

\begin{abstract}
In solving real world systems for higher-codimension bifurcation problems, 
one often faces the difficulty in computing the normal form or the focus 
values associated with generalized Hopf bifurcation, and the normal form 
with unfolding for higher-codimension Bogdanov-Takens bifurcation. 
The difficulty is not only coming from the tedious symbolic computation 
of focus values, but also due to the restriction on the system parameters, 
which frequently leads to failure of the conventional approach used in 
the computation even for simple $2$-dimensional nonlinear dynamical systems. 
In this paper, we use a simple $2$-dimensional epidemic model, 
for which the conventional approach fails in analyzing the stability of 
limit cycles arising from  Hopf bifurcation, 
to illustrate how our method can be efficiently applied 
to determine the codimension of Hopf bifurcation. 
Further, we apply the simplest normal form theory to consider  
codimension-$3$ Bogdanov-Takens bifurcation and present 
an efficient one-step transformation approach, compared with the 
classical six-step transformation approach to demonstrate the 
advantage of our method. 
\end{abstract}

\begin{keyword}
Generalized Hopf bifurcation, Bogdanov-Takens (B-T) bifurcation, 
hierarchical parametric analysis, codimension,
limit cycle, the simplest normal form.
\MSC 34C07, 34C15
\end{keyword}

\end{frontmatter}

\section{Introduction}

Limit cycle theory plays a very important role in the study of 
nonlinear dynamical systems, related to the well-known phenomenon 
of self-oscillations arising from physical science and 
engineering~\cite{GuckenheimerHolmes1993,HanYu2012}. 
Hopf and Bogdanov-Takens (B-T) bifurcations are two main bifurcations 
generating limit cycles in real world systems. 
A common task of the study in such systems 
is to determine the codimension of the bifurcation
and to derive the associated normal form,
which is not easy for higher-codimension bifurcations. 
Particularly, when considering practical systems, determining 
the codimension of the two bifurcations becomes very difficult
due to physical limitations on the system parameters. 
For example, consider the maximal number of 
limit cycles arising from generalized Hopf bifurcation in a 
2-dimensional nonlinear system, which may be reduced from 
an $n$-dimensional system by applying center manifold theory, 
described by the following ordinary differential equations: 
\begin{equation}\label{Eqn1}
\dot{x} = f(x,\mu,\alpha), \quad 
x \in {\rm R}^2, \ \ \mu \in {\rm R}, \ \ \alpha \in {\rm R}^m, 
\end{equation}
where the dot denotes differentiation with respect to time $t$, 
$\mu$ is a perturbation parameter, and $\alpha$ is a 
constant vector representing the coefficients or parameters in the 
function $f$. Assume that $x=0$ is an equilibrium of \eqref{Eqn1}, yielding 
$f(0,\mu,\alpha)=0$. Moreover, suppose that the Jacobian of the system 
evaluated on the equilibrium $x=0$ at the critical point $\mu=\mu_c=0$
has a pair of purely imaginary eigenvalues $\pm\, i\,\omega_c $. 
Then, applying the normal form theory 
(e.g. see~\cite{Takens1974,Marsden1976,GuckenheimerHolmes1993,
HanYu2012,Kuznetsov1998}) 
to \eqref{Eqn1}, associated with 
the Hopf bifurcation, we obtain the following {\it classical/conventional} 
normal form (CNF) in the polar coordinates, 
\begin{equation}\label{Eqn2}
\begin{array}{rl} 
\dot{r} \!\!\!\! & = r \, (v_0
+ v_1\, r^2 + v_2\, r^4 + \cdots + v_k \, r^{2k} + \cdots ), \\[1.0ex]  
\dot{\theta} \!\!\!\! & = \omega_c 
+ \tau_0 \, \mu + \tau_1\, r^2 + \tau_2\, r^4 + \cdots + \tau_k \, r^{2k} 
+ \cdots, 
\end{array}  
\end{equation}  
where $r$ and $\theta$ are the amplitude and phase of motion, respectively, 
$v_j\ (j=0,1,2, \cdots)$ is called the $j$th-order focus value. 
Note that $v_j$'s are functions of $\alpha$ and $\mu$. 
$v_0$ is obtained 
from a linear analysis as $v_0 \!=\! v_{\rm H}\, \mu$, where 
$v_{\rm H} \ne 0 $ is called the transversal condition of 
Hopf bifurcation, while finding $v_j \ (j\ge 1)$ needs a nonlinear analysis 
such as normal form or focus value computation. 
Note that the frequency $\omega_c$ is usually scaled to $1$.
 
The CNF can be further simplified to the so called 
{\it simplest normal form} (SNF) or {\it unique/minimal normal form} or 
{\it hypernormal form} (e.g, see~\cite{Baider1991,
Algaba1998,Ushiki1984,Yu1999,GY2010,GY2012,YL2003,GM2015,YuZhang2019}). 
The SNF of Hopf bifurcation can be classified into 
three categories (at the Hopf critical point with $\mu=0$, i.e.,
$v_0 \!=\! 0$)~\cite{Yu1999} as follows: 
\begin{equation}\label{Eqn3}
\begin{array}{cl}
\textrm{(I)} 
& v_1 \ne 0 \!: \ \ \left\{
\begin{array}{ll}
\dot{r} = v_1 \rho^3 + v_2 \rho^5, \\[0.5ex]
\dot{\theta} = 1 + \tau_1 \rho^2; 
\end{array}
\right. 
\\[4.0ex] 
\textrm{(II)} 
& \left\{\!\!
\begin{array}{ll}
v_1 = v_2 = \cdots = v_{k-1} = 0, \, v_k \ne 0 \\
\tau_1 = \tau_2 = \cdots = \tau_{k-1} = 0
\end{array}
\right.\! :
\quad \left\{\!\!
\begin{array}{ll}
\dot{r} = v_k \rho^{2k+1} + v_{2k} \rho^{4k+1}, \\[0.5ex]
\dot{\theta} = 1 + \tau_k \rho^{2k}; 
\end{array}
\right. 
\\[4.0ex] 
\textrm{(III)} 
& \left\{\!\!
\begin{array}{ll}
v_1 = v_2 = \cdots = v_{k-1} = 0,\, v_k \ne 0 \\
\tau_1 = \tau_2 = \cdots = \tau_{j-2} = 0,\, \tau_{j-1} \ne 0, \
(1 \le j \le k) 
\end{array}
\right.\! :
\\[3.0ex]
& \left\{
\begin{array}{ll}
\dot{r} = v_k \rho^{2k+1} + v_{2k} \rho^{4k+1}, \\[0.5ex]
\dot{\theta} = 1 + \tau_{j-1} \rho^{2(j-1)}
+ \tau_j \rho^{2j} + \cdots + \tau_k \rho^{2k}. 
\end{array}
\right.  
\end{array}  
\end{equation}  
It can be seen that the CNF \eqref{Eqn2} 
contains an infinite number of ``tails'', while the SNF 
\eqref{Eqn3} has only a finite number of terms, since the infinite tails 
in the CNF have been removed by a further arbitrarily high-order 
nonlinear transformation. 
Then, the codimension of Hopf bifurcation is defined by the first 
non-vanishing focus value. Thus, the codimension of Hopf bifurcation 
is $1$ for the case (I), and $k$ for the cases (II) and (III). 
However, it should be noted that the above conclusion is based on the 
assumption that the vector parameter $\alpha$ is real (without any 
additional restriction), and therefore the number $k$ can usually reach 
its maximal value. 

In solving Hopf bifurcation problems, the standard approach is to 
compute the focus values (or the normal form) of the system 
associated with a Hopf bifurcation from an equilibrium solution. 
The computation is often carried out with the aid of a computer 
algebraic software such as Maple or Mathematica. 
Then, one needs to solve a multi-variate polynomial system 
based on the normal form 
or the focus values. There are two main difficulties in dealing with the 
problems related to the above normal forms. The first one is due to 
the symbolic computational complexity in the focus value (or the normal form) 
computation, which is a result of the application of the conventional 
approach used in stability and bifurcation analysis. 
This will be seen in the next section when we 
deal with Hopf bifurcation in a simple epidemic model. 
The second difficulty is owing to that practical systems often have extra 
restriction on the system parameters because system parameters 
must be positive or even restricted to certain limited values.  
Suppose that the system under consideration involves $4$ real parameters.
In general, if these parameters are assumed real, 
then the maximal number of bifurcating limit cycles may be $4$, the same as 
the number of parameters. However, if it is a biological system or other 
physical systems, due to limitation on the parameters, the maximal number 
of limit cycles might be $3$, $2$, or even only $1$. In this case, 
determining the codimension of the Hopf bifurcation, that is, determining 
the maximal number of bifurcating limit cycles can be much more difficult. 
The difficulty is mainly from solving the polynomial systems (suppose 
the focus values have been obtained), since one needs to determine the 
sign of the polynomials with the variation of many variables (parameters).   

For the Bogdanov-Takens (B-T) bifurcation, the analysis of codimension-$2$ 
B-T bifurcation has become 
standard~\cite{GuckenheimerHolmes1993,Kuznetsov1998}. 
However, for codimension-$3$ or higher-codimendion (or degenerate) 
B-T bifurcations, the computation of the normal forms becomes much more 
involved, particularly in order to establish the relation between 
the original system and the simplified system (the normal form). 
Consider the system \eqref{Eqn1} which now has a nilpotent critical 
point at the origin (characterized by a double-zero eigenvalue), with more
than one perturbation parameters (unfolding), which is rewritten as  
\begin{equation}\label{Eqn4}
\dot{x} = f(x,\mu,\alpha), \quad 
x \in {\rm R}^2, \ \ \mu \in {\rm R}^p, \ (p \geqslant 2), 
\ \ \alpha \in {\rm R}^m, 
\end{equation}
Then, the CNF of B-T bifurcation for system \eqref{Eqn4} at the 
critical point $\mu \!=\! 0$ can be written as (e.g., see 
\cite{Takens1974,GuckenheimerHolmes1993,HanYu2012,Kuznetsov1998})
\begin{equation}\label{Eqn5} 
\begin{array}{ll} 
\dot{x}_1 = x_2, \\[1.0ex]  
\dot{x}_2 = \dss\sum_{k=2}^\infty \left( c_{k0}\, x_1^k + c_{(k-1) 1}\, 
x_1^{k-1} x_2 \right),  
\end{array}  
\end{equation} 
where the normal form coefficients $c_{k0}$ and $c_{(k-1) 1}$ 
are functions of the vector parameter $\alpha$ and $\mu$. 
Unlike the SNF of Hopf bifurcation, the classification of the SNF 
of \eqref{Eqn4} is much more complicated. The most common case in 
real applications is the cusp B-T bifurcation and degenerate 
cusp B-T bifurcations when $c_{20} \ne 0$. 
The SNF for such B-T bifurcations is    
given by \cite{Han1997,YuZhang2019}
\begin{equation}\label{Eqn6}
\begin{array}{ll} 
\dot{x}_1 = x_2, \\[1.0ex]  
\dot{x}_2 = c_{20}\, x_1^2 + c_{11}\, x_1 x_2    
+ c_{31}\, x_1^3 x_2 + c_{41}\, x_1^4 x_2 + c_{61}\, x_1^6 x_2 + 
c_{71}\, x_1^7 x_2 + \cdots
\end{array} 
\end{equation} 
which can be used to estimate the codimension of the B-T bifurcation. 
Besides the case $c_{20} \! \ne \! 0$, the first non-vanishing coefficients 
determines the codimension. For example, 
it is a codimension-$2$ (cusp) B-T bifurcation when $c_{11} \!\ne\! 0$; 
a codimension-$3$ (degenerate cusp) B-T bifurcation when $c_{11} \!=\! 0$ 
and $c_{31} \! \ne \! 0$; 
and a codimension-$4$ (degenerate cusp) B-T bifurcation when 
$c_{11} \!=\! c_{31} \!=\! 0$ and $c_{41} \! \ne \! 0$; and so on. 
In general, the first non-vanishing coefficient can be written as 
$c_{j1}$, where $j \!=\! \big[\frac{3(k-1)}{2}\big],\ (k \geqslant 2)$. 
However, 
whether or not such an estimate of the codimension is true depends upon  
the derivation of the unfolding expressed in the perturbation parameters. 
This leads to much more computational demanding and will be seen in 
section 3. 

The codimension-$3$ degenerate cusp B-T bifurcation 
(when $c_{20} c_{31} \ne 0, \, c_{11} = 0$)
was studied by Dumortier {\it et al.} in 1987~\cite{Dumortier1987}, 
and the classical six-step transformation approach was developed 
and widely used by researchers in deriving the SNF with unfolding. 
We remark that some mistakes appeared in discussing B-T bifurcation 
in~\cite{Dumortier1987}, and were pointed out and corrected 
in~\cite{HLY2018}. 
Recently, the so-called one-step transformation method was proposed 
\cite{YuZhang2019,ZengDengYu2020}, which provides the transformation for 
the state variables, the parameters and the time rescaling in just one step, 
yielding a direct relation between the original system and the SNF. 
This not only greatly simplifies the analysis, but also clearly shows 
the impact of the original system parameters on the dynamical behaviours 
of the system. This method is based on the SNF 
theory and the parametric simplest normal (PSNF) theory 
\cite{Yu1999,GY2010,GY2012,YL2003,GM2015,YuZhang2019}. 
The key step involved in this method 
is to choose appropriate bases for the SNF and 
PSNF, as well as in the nonlinear transformations. 

In nowadays, using computer software package such as MATCONT~\cite{DGK2003}
or XPPAUTO \cite{Roussel2019} to plot bifurcation diagrams of nonlinear 
dynamical systems becomes very popular and quite useful in applications, 
particularly for lower-codimension bifurcations such as saddle-node, 
transcritical, Hopf and B-T bifurcations. The basic idea is 
to use computer simulation to search the critical bifurcation points/curves 
in the parameter space, and the bifurcation diagram is usually plotted in 
a $2$-dimensional parameter plane. However, such techniques do not 
provide analytical formulas in terms of parameters for a parametric 
study in designs. Also, they are not applicable for higher-codimension 
bifurcations. Therefore, it is necessary to develop efficient methods 
for the analysis of higher-codimension bifurcations.   
 
In this paper, we will use a simple epidemic model to illustrate 
how to determine the codimension of Hopf and B-T bifurcations. 
In particular, we will show how to determine the codimension of 
Hopf bifurcation, and introduce both the six-step and one-step 
transformation approaches for the codimension-$3$ (degenerate cusp) 
B-T bifurcation to give a comparison. The epidemic model has been studied in 
\cite{LiZhouWuMa2007} for Hopf bifurcation and codimension-$2$ 
B-T bifurcation. Later, Li {\it et al.}~\cite{LiLiMa2015}
gave a complete analysis on the codimension-$3$ B-T bifurcation 
using the six-step method. The simple SI-epidemic model is described 
by the following differential equations, 
\begin{equation}\label{Eqn7}
\begin{array}{rl}
\dfrac{{\rm d} S}{{\rm d}t} = \!\!\! & A - dS - \beta (1+\varepsilon I) SI,
\\[2.0ex]
\dfrac{{\rm d} I}{{\rm d}t} = \!\!\! & \beta (1+\varepsilon I) SI 
- (d + \alpha) I, 
\end{array}
\end{equation}
where $S$ and $I$ represent the numbers of the susceptible 
and infective populations, respectively; $A$, $d$ and $\alpha$ denote 
the recruitment rate of susceptibles, the nature death rate, and 
the sum of the recover rate and the disease-related death rate, 
respectively; and $\beta (1 + \varepsilon I)SI $ is the incidence rate. 
All the parameters $A,\, d,\, \alpha,\, \beta $ and 
$\varepsilon$ take positive real values. 

In \cite{LiZhouWuMa2007}, the authors use  
$I=X$, $N=S+I=Y$ and apply the rescaling $ \tau = \alpha\, t$ 
to model \eqref{Eqn7} to obtain the following dimensionless system, 
\begin{equation}\label{Eqn8}
\begin{array}{rl}
\dfrac{{\rm d} X}{{\rm d} \tau} 
= \!\!\! & X \big[ k (1+\varepsilon X)(Y-X) -(n + 1) \big], \\[2.0ex]
\dfrac{{\rm d}Y}{{\rm d} \tau} = \!\!\! & m - n Y  - X,
\end{array}
\end{equation} 
where the new parameters are defined as 
\begin{equation}\label{Eqn9}
k = \dfrac{\beta}{\alpha}, \quad 
m= \dfrac{A}{\alpha}, \quad n = \frac{d}{\alpha}. 
\end{equation} 
Later, this model was further studied by Zeng and Yu 
\cite{ZengYu2021} for a detailed analysis on Hopf bifurcation.  
Note that the dimensionless model \eqref{Eqn8} contains $4$ parameters.
However, one can make a further transformation, as given by 
\begin{equation}\label{Eqn10}
X = m\,x, \quad Y = m\,y, \quad  k = \dfrac{1}{m}\, \kappa, \quad 
\varepsilon = \dfrac{1}{m}\, e,  
\end{equation}
to eliminate the parameter $m$, yielding 
\begin{equation}\label{Eqn11}
\begin{array}{rl}
\dfrac{{\rm d} x}{{\rm d} \tau} 
= \!\!\! & x\, \big[ \kappa (1+ e\, x)(y-x) -(n + 1) \big], \\[2.0ex]
\dfrac{{\rm d}y}{{\rm d} \tau} = \!\!\! & 1 - n y - x.
\end{array}
\end{equation} 
In other words, under the transformation \eqref{Eqn10}, 
without loss of generality, one can simply set $m=1$ in system \eqref{Eqn8}. 
In this paper, we will use system \eqref{Eqn11} for bifurcation analysis. 
In fact, using the system \eqref{Eqn8} shows that 
$m$ does not play any roles on the bifurcation analysis. 
However, in order to keep our simulation results consistent with those 
given in \cite{LiZhouWuMa2007} and \cite{ZengYu2021}, 
our simulations presented in this paper are still based on 
system \eqref{Eqn8}, and all the notations introduced later for 
system \eqref{Eqn11} can be easily extended to system \eqref{Eqn8}
with the transformation \eqref{Eqn10}.  

It should be pointed out that most epidemic models
have the well-posedness property, that is, solutions of such a model 
remain positive if the initial points take positive values, and are bounded. 
However, for the system \eqref{Eqn11} (or the system \eqref{Eqn8}), 
the first quadrant in the $x$-$y$ plane is not invariant.
Trajectories starting from the initial points in the first quadrant may
pass through the $x$-axis to enter the fourth quadrant and then return
to the first quadrant. Since the $y$-axis is invariant, 
any trajectories starting from the initial points in the first or 
fourth quadrant will either remain in or eventually enter the first quadrant.
In other words, if restricted to the region:
$\{(x,y) \! \mid \! x \! \geqslant \! 0 \}$, the well-posedness property on
the solutions of \eqref{Eqn8} is well defined. 
Although this model is not perfect, we do not intend to improve 
it since the aim of this paper is to use this model to 
demonstrate a solution procedure for studying higher-codimension Hopf and B-T 
bifurcations. As discussed in \cite{LiZhouWuMa2007}, we may focus on 
the domain of interest for the model, defined by 
\begin{equation}\label{Eqn12} 
\begin{array}{rll} 
& \Omega = \left\{(x,y) \left|\, 0 \leqslant 
x< y \leqslant \dfrac{1}{n} \right. \right\} & \textrm{for system } 
\eqref{Eqn11}, \\[2.5ex] 
\textrm{or} \ \ & 
\Omega = \left\{(X,Y) \left|\, 0 \leqslant 
X< Y \leqslant \dfrac{m}{n} \right. \right\} & \textrm{for system }  
\eqref{Eqn8}. \\[1.0ex] 
\end{array}  
\end{equation} 
Note that $\Omega$ does not serve as a trapping region 
for the dynamical solutions in the first quadrant.  

In the next section, we study Hopf and generalized Hopf bifurcations
of system \eqref{Eqn11} and focus on the study of codimension. 
Then, in Section 3 we consider the B-T bifurcation in system \eqref{Eqn11} 
and pay particular attention to codimension-$3$ B-T bifurcation. 
Various simulations showing different bifurcation phenomena are given 
to illustrate the theoretical predictions. 
A concluding remark is given in Section 4.

\section{Hopf bifurcation of system \eqref{Eqn11}}

In this section, we first derive the equilibrium solutions 
of system \eqref{Eqn11} and their stability, and then consider 
the maximal number of limit cycles 
which may bifurcate from Hopf critical points. 
Although the stability conditions for the equilibria of system \eqref{Eqn8} 
were given in \cite{LiZhouWuMa2007}, the analysis on 
Hopf bifurcation has not been completely explored. 
In particular, we will rigorously prove that the codimension of 
the Hopf bifurcation is two.  
When using a classical method, one usually expresses equilibrium 
solutions in terms of the system parameters. The advantage of this 
approach is to show the dynamical behaviours of the system, 
such as stability and bifurcations, clearly in the parameter space. 
However, if the equilibrium solutions cannot be simply expressed 
in terms of the system parameters, for example, if they are determined 
by a quadratic equation, then the analysis on stability and bifurcations 
becomes much more involved. Especially, it causes more difficulty in 
computing normal forms (focus values) and it is almost impossible to 
determine whether a focus value can change its sign or not, which is 
directly related to determining the codimension of Hopf bifurcation.

\subsection{Stability of bifurcating limit cycles} 

First, we derive the conditions for the existence of 
the equilibrium solutions of system \eqref{Eqn11} 
and their stability. We will give a complete partition in the 
parameter space for the bifurcation analysis.  
Setting $ \frac{{\rm d} x}{{\rm d} \tau} \!=\! \frac{{\rm d} y}{{\rm d} \tau} 
\!=\! 0$ in system \eqref{Eqn11} yields two equilibrium solutions, 
\begin{equation}\label{Eqn13} 
\begin{array}{ll} 
{\rm P_0}: & (x_0,\, y_0) = \Big(0, \, \dfrac{1}{n} \Big), \\[1.0ex] 
{\rm P_1}: & (x_1,\, y_1) = \big(1-n y_1, \, y_1 \big), \quad 
\Big( 0 < y_1 < \dfrac{1}{n} \Big),  
\end{array} 
\end{equation}
where ${\rm P_0}$ is the infection-free equilibrium (boundary equilibrium) 
and ${\rm P_1}$ is the infectious equilibrium (positive equilibrium), with 
$y_1$ determined from the following quadratic polynomial, 
\begin{equation}\label{Eqn14} 
g(y_1,\kappa) = 1 + \kappa\, e\, n (y_1 - y_{1*}) (y_1 - y_1^*), \quad  
\textrm{where} \ \ y_{1*} = \dfrac{1}{n + 1}, \ 
y_1^* = \dfrac{1}{n} + \dfrac{1}{e\,n}.  
\end{equation} 
Solving $g=0$ gives the infectious equilibrium solutions, 
\begin{equation}\label{Eqn15}
y_{1\pm} = \dfrac{1}{2 \kappa \,e\, n(n+1)} 
\left\{ \kappa \big[e\, (2n+1) + n+1 \big] \pm \sqrt{\Delta} \right\},  
\quad x_{1\pm} = 1 - n y_{1\pm}, 
\end{equation} 
where 
\begin{equation}\label{Eqn16} 
\Delta = \kappa\, \big[ \kappa \,(e +n +1)^2 - 4\,e\, n (n+1)^2 \big]. 
\end{equation}
For convenience, define 
\begin{equation}\label{Eqn17} 
\begin{array}{rl} 
{\rm P_{\rm 1 \pm}} = \!\!\! & ( 1 - n\, y_{1 \pm}, \, y_{1 \pm} ), \\[1.0ex] 
e_1 = \!\!\! & n + 1, \quad 
e_2 = 4 n\, e_1^2, \quad 
e_3 = \dfrac{e_1^2}{1 - n}, \ (n < 1), \quad 
e_4 = \dfrac{e_1^2}{n},
\\[2.5ex]
e_\pm = \!\!\! & \dfrac{e_1}{2 n}
\left[ n+1 \pm \sqrt{(n+1) (1-3 n)} \right], \quad \Big( n \leqslant
\dfrac{1}{3} \Big),
\\[2.5ex]
\kappa_{\rm T} = \!\!\! & n\, e_1, \quad 
\kappa_{\rm SN}=\dfrac{e\, e_2} {(e + e_1)^2}, \quad 
\kappa^* = \dfrac{2 n \,e\, e_1 e_4} {(e + e_1) (e + e_4 )}, 
\\[2.5ex] 
\kappa_{\rm H_\pm} 
= \!\!\! & \dfrac{ e\, [\,n\,(e+e_4)+e] \pm n\, (e_4 - e) 
\sqrt{e (e -e_2)}} {2 (e + e_1)^2}, 
\\[2.5ex] 
y_{\rm 1SN} = \!\!\! & \dfrac{1}{e_1} 
+ \dfrac{1}{2n} \Big(\dfrac{1}{e} + \dfrac{1}{e_1} \Big)
\in (y_{1*},\, y_1^*), \quad y_{\rm 1T} = \dfrac{1}{n}
\in (y_{1*},\, y_1^* ), 
\\[2.5ex] 
{\rm R_0} = \!\!\! & \dfrac{\kappa}{n(n+1)} \stackrel{\triangle}{=} 
\dfrac{\kappa}{\kappa_{\rm T}}, 
\end{array}
\end{equation}
where ${\rm R_0}$ is the basic reproduction number, 
and the subscripts T, SN and H represent Transcritical, Saddle-node 
and Hopf bifurcations.  
All the above notations for $y_1$, $e$ and $\kappa$ can be similarly 
defined for $Y_1$, $\varepsilon$ and $k$ of system \eqref{Eqn8} via the 
transformation \eqref{Eqn10} as follows: 
\begin{equation}\label{Eqn18} 
\begin{array}{rllll} 
Y_{1\pm} = \!\!\! & m\, y_{1\pm}, & 
Y_{\rm 1SN}=m\, y_{\rm 1 SN}, & Y_{\rm 1T}=m\, y_{\rm 1T}, & 
Y_{1*}=m\, y_{1*}, \ Y_1^* = m\, y_1^* \\[1.0ex] 
\varepsilon_i = \!\!\! & \dfrac{e_i}{m}, & \hspace{-0.25in} i=1,2,3,4, & 
\varepsilon_{\pm}  = \dfrac{e_\pm}{m}, \\[2.0ex]
k_{\rm T}= \!\!\! & \dfrac{\kappa_{\rm T}}{m}, &  
k_{\rm SN}= \dfrac{\kappa_{\rm SN}}{m}, &  
k^* = \dfrac{\kappa^*}{m}, &  
k_{\rm H_\pm} = \dfrac{\kappa_{\rm H_\pm}}{m}, 
\end{array}
\end{equation}
and $ X_{1\pm} \!=\! m \!-\! n Y_{1\pm}$. The notations given in 
\eqref{Eqn18} will be used in figures for simulations and 
in bifurcation diagrams.  

In the following analysis, ${\rm P_1}$ denotes ${\rm P_{1\pm}}$. 
It is easy to show that $\kappa_{\rm T} \!>\! \kappa_{\rm SN}$. 
Further, we treat $\kappa$ as a bifurcation 
parameter, and the other two parameters $e$ and $n$ 
as control parameters. In addition, we call the Type-I bistable phenomenon 
(or Type-I coexistence of bistable states) 
if two stable equilibria coexist, and the Type-II bistable phenomenon 
(or Type-II coexistence of bistable states)
if a stable equilibrium and a stable limit cycle coexist.   

In \cite{ZengYu2021}, the following lemma about the stability and 
bifurcation of the equilibria ${\rm P_0}$ and ${\rm P_1}$ has been 
proved. However, whether the Hopf bifurcations are supercritical 
or subcritical is not proved in \cite{ZengYu2021} since the conventional 
analytical method does not work due to the complex expressions of 
$y_{1-}$ and $\kappa_{\rm H_{\pm}}$, which makes it impossible to compute 
the focus values or the normal forms associated with Hopf bifurcations.  
In the following, we first derive the explicit 
conditions on the parameters to classify the types of Hopf bifurcations, 
and then consider the codimension of Hopf bifurcations. 
For the readability of the readers, we list Theorem 2.1 in \cite{ZengYu2021} 
for system \eqref{Eqn8} as a lemma below, with a modification to 
adapt system \eqref{Eqn11}. 

\begin{lemma}\label{Lem1}
For the system \eqref{Eqn11},
the infection-free equilibrium  ${\rm P_0}$ is
asymptotically stable if the basic reduction number, 
${\rm R_0} \!<\! 1\ ($i.e., $ \kappa \!<\! \kappa_{\rm T})$, 
and unstable if ${\rm R_0} \!>\! 1\ ($i.e., $ \kappa \!>\! \kappa_{\rm T})$.
The infectious equilibrium
${\rm P_1}$ does not exist for $\kappa \!<\! \kappa_{\rm SN}$;
${\rm P_{1-}}$ exists only for $\kappa \!\geqslant\! \kappa_{\rm T}$ and 
$ e \! \leqslant \! e_1$; both ${\rm P_{1\pm}}$ exist
for $\kappa \!>\! \kappa_{\rm SN}$ and $ e \! > \! e_1$,
with ${\rm P_{1+}}$ beign a saddle point.
A transcritical bifurcation occurs between ${\rm P_0}$ and ${\rm P_1}$
at the critical point $\kappa \!=\! \kappa_{\rm T}$.
Hopf bifurcations can occur from the equilibrium ${\rm P_{1-}}$ 
under certain conditions on the parameters. More details are
given below.
\begin{itemize}
\item[{\rm (I)}]
When $e \! \leqslant \! e_1 $, no bistable phenomenon can happen. Moreover, 

\begin{itemize}
\item[{\rm (I-a)}]
if $e \!\leqslant\!  \min\{ e_1,e_2\}$, then 
${\rm P_{1-}}$ is asymptotically
stable for $\kappa \! > \! \kappa_{\rm T}$, and no Hopf bifurcation can happen;

\item[{\rm (I-b)}]
if $n \! \leqslant \! \frac{\sqrt{2}-1}{2}$ and
$ e_2 \!<\! e \! \leqslant \! e_1$,
then two Hopf bifurcations occur at $\kappa \! =\! \kappa_{\rm H_-}$ and 
$\kappa \!=\! \kappa_{\rm H_+}$; ${\rm P_{1-}}$ is asymptotically stable for
$\kappa \! \in \! (\kappa_{\rm T},\, \kappa_{\rm H_-}) \bigcup 
(\kappa_{\rm H_+}, \infty)$,
and unstable for $\kappa \! \in \! (\kappa_{\rm H_-},\, \kappa_{\rm H_+})$.
\end{itemize}

\item[{\rm (II)}]
When $e \!>\! n+1$, the following holds. 

\begin{itemize}
\item[{\rm (II-a)}]
${\rm P_{\rm 1-}}$ is asymptotically stable, and no Hopf bifurcation 
can happen if one of the following
conditions is satisfied:

{\rm (II-a-i)} $ e \!\geqslant\! e_4$ and
$ \kappa \!>\! \max\{\kappa_{\rm SN},\, \kappa^*\}$;

{\rm (II-a-ii)} $n \!<\! 1$, $e \!>\! \max\{e_3, e_4\}$ and $\kappa=\kappa^*$; 

{\rm (II-a-iii)} $n \! \geqslant \! \frac{\sqrt{2}-1}{2}$,
$e_1 \!<\! e \!<\! \min\{e_2, e_4 \}$ and $ \kappa \!>\! \kappa_{\rm SN} \ 
(>\kappa^*)$.

Type-I bistable phenomenon can occur in all the above three subcases.

\item[{\rm (II-b)}]
If $ 0 \!<\! n \!<\! \frac{1}{2}$,
$ e_3 \!<\! e \! \leqslant \! e_4$
and $ \kappa_{\rm SN} \!<\! \kappa \! \leqslant \! \kappa^*$, then 
${\rm P_{1-}}$ is unstable, excluding Hopf bifurcation.  

\item[{\rm (II-c)}] 
One Hopf bifurcation occurs in the following cases. 
\begin{itemize}
\item[{\rm (II-c-i)}]
If $n \!<\! \frac{1}{2}$, $ e_3 \!<\! e \!<\! e_4$
and $\kappa \!>\! \kappa^* \ (> \! \kappa_{\rm SN})$,
then a Hopf bifurcation occurs at $ \kappa\!=\! \kappa_{\rm H_+}$;
${\rm P_{\rm 1-}}$ is asymptotically stable for
$\kappa \!\in\! ( \kappa_{\rm H_+}, \infty)$,
and unstable for $\kappa \! \in \! (\kappa_{\rm SN},\kappa_{\rm H_+})$.
Both type-I bistable states and Type-II bistable states coexist
if $\frac{1}{3} \! \leqslant \! n \!<\! \frac{1}{2}$,
or if $n \!<\! \frac{1}{3}$ with $e \!> e_+$,
for which $\kappa_{\rm H_+} \!<\kappa_{\rm T}$.

\item[{\rm (II-c-ii)}]
If $n \!<\! 1$, $e \!>\!  \max\{ e_3, e_4 \}$
and $\kappa_{\rm SN} \!<\! \kappa \!<\! \kappa^*$,
then a Hopf bifurcation occurs at $ \kappa\!=\! \kappa_{\rm H_+}$;
${\rm P_{\rm 1-}}$ is asymptotically stable for $\kappa \!>\! 
\kappa_{\rm H_+}$, and unstable for $\kappa \! \in \! 
(\kappa_{\rm SN},\kappa_{\rm H_+})$.
Both Type-I and Type-II bistable states coexist,
since $\kappa_{\rm H_+} \!<\kappa_{\rm T}$.
\end{itemize}

\item[{\rm (II-d)}]
Two Hopf bifurcations occur at $ \kappa \!=\! \kappa_{\rm H_-}$
and $ \kappa \!=\! \kappa_{\rm H_+}$ if $ n \!<\! \frac{1}{2}$,
$ \max\{e_1, e_2\} $ $ <\! e \!<\! e_3$
and $\kappa \!\geqslant \! \kappa_{\rm SN} \ (>\! \kappa^*)$;
${\rm P_{\rm 1-}}$ is asymptotically stable for
$\kappa \!\in \! (\kappa_{\rm SN},\kappa_{\rm H_-}) \bigcup 
(\kappa_{\rm H_+},\infty)$,
and unstable for $ \kappa \! \in \! (\kappa_{\rm H_-},\kappa_{\rm H_+})$.
Type-I bistable states coexist, and Type-II bistable states coexist
if $ \frac{\sqrt{5}-1}{4} \! \leqslant \! n \!<\! \frac{1}{2}$
and $ e_2 \!<\! e \!<\! e_3 $, or $n \!<\! \frac{\sqrt{5}-1}{4}$ and
$ e_- \!<\! e \!<\! e_3$ for
which $\kappa_{\rm H_-} \! <\! \kappa_{\rm T}$; and
if $ \frac{\sqrt{5}-1}{4} \! \leqslant \! n \!<\! \frac{1}{3}$ and
$ e_2 \!<\! e \!<\! e_-$, 
or $ \frac{1}{3} \!\leqslant\! n \!<\! \frac{1}{2}$ and
$e_2 \!<\! e \!<\! e_3$  for which $\kappa_{\rm H_+} \! <\! \kappa_{\rm T}$.
\end{itemize}
\end{itemize} 
\end{lemma}

\begin{figure}[!h]
\vspace*{-0.20in}
\begin{center} 
\vspace{-2.00in}
\hspace*{-0.50in}
\begin{overpic}[width=1.2\textwidth,height=1.12\textheight]{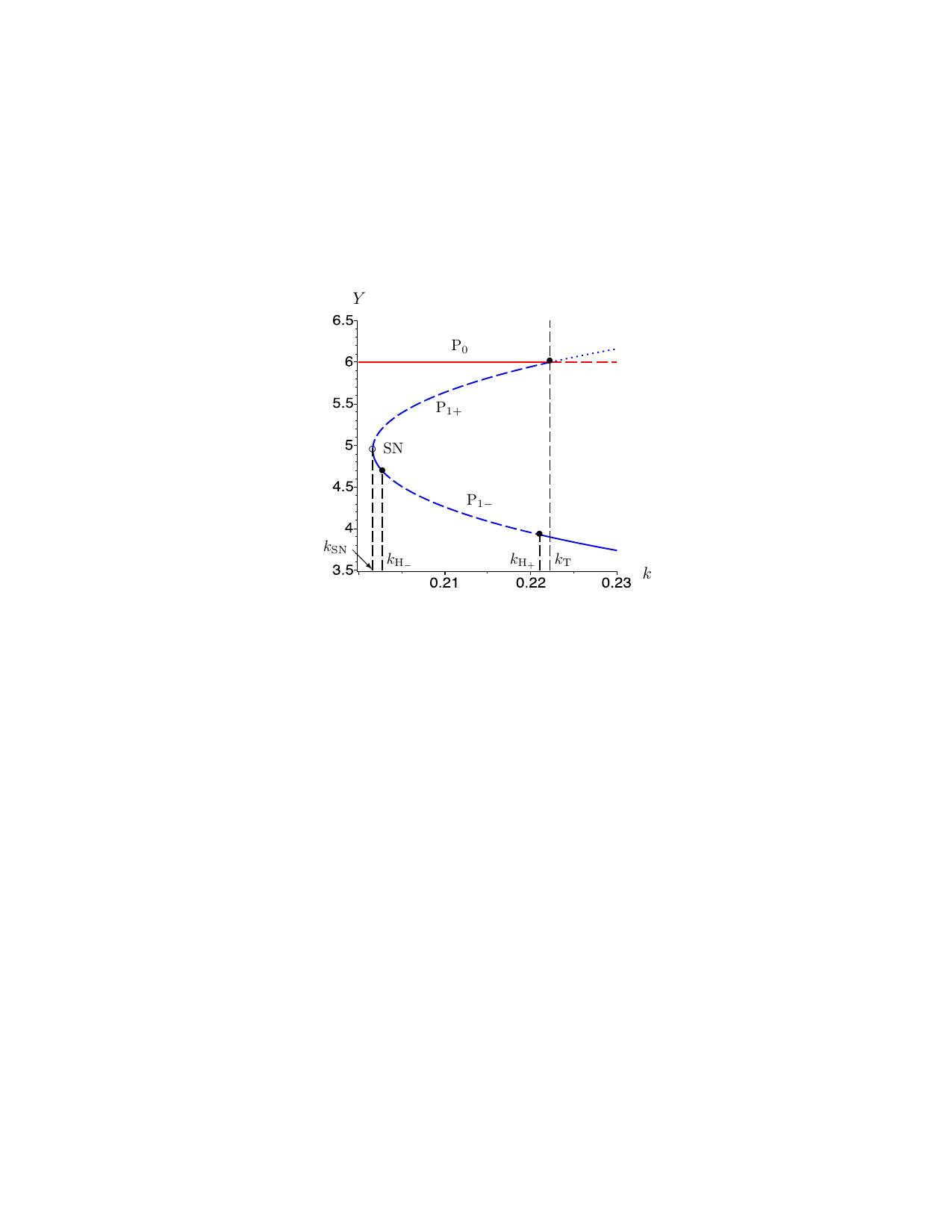}
\end{overpic}

\vspace{-4.90in}
\caption{Bifurcation diagram for the model \eqref{Eqn8}
projected on the $k$-$Y$ plane 
with $m=2$, $n=\frac{1}{3}$ and $ \varepsilon = \frac{5}{4}$,
corresponding to \vspace{0.03in} the Case (II-d) in Lemma~\ref{Lem1} 
with $k^* \!=\! \frac{640}{3243}$,
$k_{\rm SN} \!=\! \frac{320}{1587}$ 
and $k_{\rm SN} \!=\! \frac{320}{1587}$, 
having two Hopf critical points at 
$k_{\rm H_\pm} \!=\! \frac{4035 \pm 17 \sqrt{105}}{19044}$. 
The colored solid and dashed curves denote
stable and unstable equilibria, respectively, while the dotted curve 
represents a mathematical solution without biological meaning.}
\label{fig1} 
\end{center}
\end{figure}

The bifurcation diagram for the Case (II-d) is given in Figure~\ref{fig1}
for system \eqref{Eqn8},
with the parameter values: $m\!=\!2$, $n \!=\! \frac{1}{3}$ and 
$\varepsilon \!=\! \frac{5}{4}$. 
Note that the notations given in \eqref{Eqn18} are used in 
the bifurcation diagrams. 
First note that ${\rm P_1}$, determined from $g \!=\! 0$,
only exists for $ k \! \geqslant \! k_{\rm SN}$. 
However, the part of the solution $Y_1$ satisfying $Y_1 \!>\! Y_{\rm 1T} 
$ is biologically meaningless since $X_1 \! < \! 0$ when 
$Y_1 \!>\! Y_{\rm 1T}$. 
On the bifurcation diagram, projected on the $k$-$Y$ plane,
$Y_1 \!=\! Y_{1*}$ and $Y_1 \!=\! Y_1^*$ are two horizontal 
asymptotes of the curve $g \!=\! 0$ (which are not shown in 
the diagram), serving as the lower and upper 
boundaries of the solution ${\rm P_1}$. 
The curve has a unique vertex at $(k_{\rm SN}, Y_{\rm 1 SN})$.  
Moreover, using the derivative $\frac{{\rm d}k}{{\rm d} Y_1}$, 
we can show that the solution $Y_1$, determined by a function 
$k \!=\! k(Y_1)$, is monotonically decreasing for $Y_1 \!<\! Y_{\rm 1SN}$ 
and monotonically increasing for $Y_1 \!>\! Y_{\rm 1SN}$, like a parabola. 
Hence, when $Y_{\rm 1SN} \!\geqslant \! Y_{\rm 1T}$, i.e., 
when $\varepsilon \! \leqslant \! \varepsilon_1$, ${\rm P_1}$ has one solution
${\rm P_{1-}}$; while when $Y_{\rm 1SN} \! < \! Y_{\rm 1T}$, i.e., 
when $\varepsilon \! > \! \varepsilon_1$, ${\rm P_1}$ has two solutions: 
${\rm P_{1+}}$ and ${\rm P_{1-}}$, and ${\rm P_{1+}}$ exists for 
$ Y_{\rm 1SN} \! \leqslant \! Y_1 \! \leqslant Y_{\rm 1T}$, 
while ${\rm P_{1-}}$ exists for $ Y_{1*} \!<\! Y_1 \! \leqslant \! 
Y_{\rm 1SN}$. 
It is shown in the figure that there exist two Hopf bifurcations, and 
both Type-I and Type-II
bistable phenomena exist because the chosen parameter values satisfy 
$k_{\rm SN} \!<\! k_{\rm H_-} \! < k_{\rm H_+} \!<\! k_{\rm T}$.
Hence, two stable equilibria ${\rm P_0}$ and ${\rm P_{1-}}$ 
coexist for $k \! \in \! (k_{\rm SN},k_{\rm H_-}) \bigcup    
(k_{\rm H_+},k_{\rm T})$, 
while stable ${\rm P_0}$ and a stable limit 
cycle (which is verified by simulation and needs a rigorous proof) 
coexist for $ k \! \in \! (k_{\rm H_-},k_{\rm H_+})$.

Whether the Hopf bifurcations for the cases
(I-b), (II-c) and (II-d) in Lemma~\ref{Lem1} 
are supercritical or subcritical depends on the sign of the first-order 
focus value associated with the Hopf bifurcation. 
However, if one uses the expressions of the solution $y_{1-}$ 
and the critical points $\kappa_{\rm H_{\pm}}$ given in \eqref{Eqn17} to 
derive the first-order focus value, it is impossible to compute 
the first-order focus value since the resulting equations are 
too complex to deal with. 
We will use a parameter which is linear in the function $g$, 
instead of $y_1$, to solve $g=0$. $y_1$ is then treated as 
a ``parameter'' in the stability and bifurcation analysis, because it 
is a component of the equilibrium solution ${\rm P_1}$ and is indeed 
a function of the system parameters. Consequently, the stability conditions 
on the parameters need to be rederived using $y_1$ and the other parameters. 

We have following result for determining whether the Hopf bifurcations 
in Lemma~\ref{Lem1} are supercritical or subcritical.  

\begin{theorem}\label{Thm2.2} 
Hopf bifurcation of system \eqref{Eqn11} exists for 
$\,0 < n <\! 1$, and it is supercritical for 
$0 \!<\! n \! \leqslant \! \frac{1}{3}$, and subcritical for 
$ \frac{1}{2} \! \leqslant \! n \!<\! 1$. When $\frac{1}{3} \!<\! 
n \!<\! \frac{1}{2}$, Hopf bifurcation is supercritical if 
$ e \!<\! e^*$, and subcritical if 
$e \!>\! e^*$, where  
\begin{equation}\label{Eqn19} 
\begin{array}{rl} 
e^* \!\!\!\! & 
= \dfrac{n (n+1)^2}{(1-2n) \sqrt{(1+n) (1-2 n)} + (n^2+2 n-1)},
  \quad n \in \Big( \dfrac{1}{3}, \dfrac{1}{2} \Big).  
\end{array} 
\end{equation} 
For the cases in Lemma~\ref{Lem1}, 
the Hopf bifurcations in the Cases {\rm (I-b), (II-c-i)} and 
{\rm (II-d)} are supercritical. 
For the Case {\rm (II-c-ii)}, the Hopf bifurcation is 
supercritical when $ 0 \!<\! n \! \leqslant \! \frac{1}{3} $, or
when $ \frac{1}{3} \!< \! n \!<\! \frac{1}{2} $ and  
$\max\{e_3,e_4\} \!<\! e \!<\! e^*$; 
and subcritical when $\frac{1}{2} \! \leqslant \! n \!<\! 1$, 
or when $\frac{1}{3} \!<\!n \!<\! \frac{1}{2}$ and 
$e \!>\! e^*$. 
\end{theorem} 

The proof of Theorem~\ref{Thm2.2} is included in the proof 
for Theorem \ref{Thm2.3}.

\subsection{Codimension of Hopf bifurcation}

In this section, we consider the codimension of the 
Hopf bifurcation from the positive equilibrium ${\rm P_{1-}}$. 
Numerical examples  
have been given in \cite{LiZhouWuMa2007} to show that both stable 
and unstable limit cycles can bifurcate from Hopf critical points. 
This implies that the first focus value may become zero, and thus 
at least $2$ limit cycles can bifurcate due to the generalized Hopf 
bifurcation, leading to a codimension-$2$ Hopf bifurcation. 
We have the following theorem. 

\begin{theorem}\label{Thm2.3}
The system \eqref{Eqn11} has generalized Hopf bifurcation with 
codimension $2$, yielding maximal $2$ limit cycles enclosing an unstable 
equilibrium ${\rm P_{1-}}$, and the inner limit cycle is stable while 
the outer one is unstable. 
\end{theorem}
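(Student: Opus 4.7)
My plan is to prove Theorem~\ref{Thm2.3} by establishing two facts: (i) at a Hopf critical point the first focus value $v_1$ can vanish within the admissible parameter region, and (ii) when $v_1=0$ the second focus value $v_2$ does not vanish, so that a suitable perturbation through $k$ and $\varepsilon$ yields exactly two small-amplitude limit cycles bifurcating from ${\rm E}_{2-}$.

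The first obstacle is the one flagged in the discussion preceding the theorem: formula \eqref{Eqn13} for $Y_{2-}$ and the corresponding Hopf values $k_{\rm H_\pm}$ contain nested radicals, so substituting them into focus-value formulas produces expressions too unwieldy to simplify. I would instead follow the hierarchical parametric philosophy advertised in the introduction. Since $F_2(Y_2,k)$ in \eqref{Eqn12} is linear in $k$, I solve $F_2=0$ for $k$, obtaining a rational function $k=k(Y_2;\varepsilon,m,n)$. Then I promote $Y_2$ to an independent parameter (with the range $Y_{\rm 2L}<Y_2\leqslant Y_{\rm 2SN}$ corresponding to ${\rm E}_{2-}$) and write all subsequent quantities as rational functions of $(Y_2,\varepsilon,m,n)$. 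After shifting the equilibrium ${\rm E}_{2-}$ to the origin, the resulting polynomial system is amenable to the standard Poincar\'e--Lyapunov focus-value recursion implemented in Maple.

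Next I impose the Hopf condition. The trace of the Jacobian at ${\rm E}_{2-}$, after clearing denominators, is linear in a convenient parameter (for instance $\varepsilon$ or $m$), so I solve $\mathop{\mathrm{tr}}=0$ for that parameter while keeping the remaining ones free; the determinant positivity condition then just restricts the admissible region. Substituting this into the focus-value algorithm produces $v_1$ as a rational function whose numerator factors into an explicit polynomial in $(Y_2,m,n,\varepsilon)$ together with a sign-definite prefactor. I expect the sign analysis of this polynomial to reproduce exactly the dichotomy recorded in Theorem~\ref{Thm2.2}, with the vanishing locus of $v_1$ on the Hopf surface being $\varepsilon=\varepsilon^*$ from \eqref{Eqn16}, realizable precisely when $n\in(\tfrac{1}{3},\tfrac{1}{2})$. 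This simultaneously finishes the deferred proof of Theorem~\ref{Thm2.2} and identifies the candidate degenerate-Hopf locus for codimension two.

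The main remaining step is to compute $v_2$ on the codimension-two stratum $\{\mathop{\mathrm{tr}}=0,\ v_1=0\}$ and show that its numerator has no zeros in the admissible region $n\in(\tfrac{1}{3},\tfrac{1}{2})$, $Y_2\in(Y_{\rm 2L},Y_{\rm 2SN})$, $m,\varepsilon>0$. This is the hard part: eliminating variables via resultants of the numerators of $v_1$ and $v_2$, or applying a Sturm-type real-root count, should confirm that no common zero lies inside the admissible region. Once $v_2\neq 0$ is established, I check that the map $(k,\varepsilon)\mapsto (v_0,v_1)$ has nonsingular Jacobian at the degenerate Hopf point, so that $v_0$ and $v_1$ can be perturbed independently through the original parameters. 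The generalized Hopf unfolding theorem (see \cite{HanYu2012,Kuznetsov1998}) then guarantees the simultaneous bifurcation of two small limit cycles around ${\rm E}_{2-}$, while the simplest normal form \eqref{Eqn3} in case (II) with $k=2$ forbids a third local limit cycle. Since Lemma~\ref{Lem1} already gives the instability of ${\rm E}_{2-}$ on the relevant sub-interval of $k$, the enclosed equilibrium is unstable, and the codimension is exactly two, as claimed.
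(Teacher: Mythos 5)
Your proposal follows essentially the same route as the paper: treat $Y_2$ as the primary parameter, eliminate the parameters in which the equilibrium equation and the trace are linear (the paper solves $F_2=0$ for $\varepsilon$ and ${\rm Tr}=0$ for $k$, exploits homogeneity to scale out $m$ via $Y_2=Rm$, and reduces the focus values to rational functions of $(n,R)$), locate the $v_1=0$ locus only for $n\in(\tfrac13,\tfrac12)$ at $R=R_-$, and rule out $v_1=v_2=0$ by a resultant computation before verifying $v_2|_{v_1=0}>0$. The choice of which linearly-occurring parameter to eliminate first is cosmetic, and your closing unfolding/regularity argument matches the paper's conclusion of codimension exactly two.
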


\begin{proof} 

[for Theorems \ref{Thm2.2} and \ref{Thm2.3}]
In order to derive the stability of ${\rm P_1}$ for studying the codimension 
of Hopf bifurcation, we solve $g=0$ for $e$, 
instead of $y_1$, to obtain 
\begin{equation}\label{Eqn20} 
e = \dfrac{ y_{1*} + \tfrac{1}{\kappa} - y_1}
{n\, (y_{\rm 1T}- y_1) (y_1- y_{1*})}. 
\end{equation} 
It follows from $e > 0$ that 
\begin{equation}\label{Eqn21} 
y_{1*} < y_1 < \min\Big\{y_{\rm 1T},\, y_{1*} +\dfrac{1}{\kappa} \Big\}.
\end{equation} 

The stability of equilibria is determined from the Jacobian of 
system \eqref{Eqn11}, given by 
$$
J(x,y) = \left[ \begin{array}{cc} 
\kappa\,[\,y-2x + e \, x (2y-3x)\, ] - (n+1) & \kappa\, x (1+ e\, x) \\ 
-1 & - n 
\end{array} 
\right]. 
$$
Evaluating this Jacobian at ${\rm P_1} \!=\! (1 \!-\! n y_1,y_1)$ yields 
the trace and determinant of the Jacobian $J({\rm P_1})$ as 
\begin{equation}\label{Eqn22} 
\begin{array}{cl} 
{\rm Tr}(J({\rm P_1})) \!\!\! & = 
\dfrac{ \kappa\, [(n+1) y_1-1]^2 + n+2-(n+1)^2 y_1 } {(n+1) y_1-1}, \\[2.0ex]  
\det (J({\rm P_1})) \!\!\! & = 
\dfrac{ (n+1)^3 (n y_1 -1)^2+n \,(\kappa-\kappa_{\rm T})\, [(n+1) y_1-1]^2 }  
{(n+1) y_1-1}.  
\end{array} 
\end{equation}
The determinant clearly shows that there exists a transcritical bifurcation 
between ${\rm P_0}$ and ${\rm P_1}$ at the critical point 
$\kappa=\kappa_{\rm T}$ at which $\det (J({\rm P_1}))=0$ leads to
$y_1 = \frac{1}{n}$. \\

Hopf bifurcation occurs when ${\rm Tr}(J({\rm P_1})) \!=\! 0$ and 
$\det (J({\rm P_1})) \!>\! 0$. Note that the formulas given in \eqref{Eqn22} 
are applicable for both solutions $y_{1+}$ and $y_{1-}$. 
In other words, the following results for ${\rm P_1}$ include both 
${\rm P_{1\pm}}\!: (x_{1\pm}, y_{1\pm})$. However, we know from the results 
in the previous section that Hopf bifurcation can only occur from 
the equilibrium ${\rm P_{1-}}$, since ${\rm P_{1+}}$ is a saddle point 
when it exists.  

A necessary condition for system \eqref{Eqn11} to undergo a Hopf 
bifurcation is ${\rm Tr}(J({\rm P_1})) =0$ which needs 
$ (n+2)-(n+1)^2 y_1 < 0$, that is then combined with the condition 
\eqref{Eqn21} to yield 
\begin{equation}\label{Eqn23} 
\dfrac{n+2}{(n+1)^2} < y_1 < \min\Big\{\dfrac{1}{n},\, 
\dfrac{1}{n+1} + \dfrac{1}{\kappa} \Big\} \quad \textrm{and} \quad 
\kappa < (n+1)^2. 
\end{equation} 

Solving ${\rm Tr}(J({\rm P_1})) = 0$ for $\kappa$ leads to the Hopf critical 
point, defined by 
\begin{equation}\label{Eqn24} 
\kappa_{\rm H} = \dfrac{(n+1)^2 y_1 -(n+2)}{ [ 1-(n+1) y_1]^2} 
= (n+1)^2 - 
\dfrac{ [2 (n+1)^2 y_1 - (2 n+3)]^2 
+ 3}{4 [ 1-(n+1) y_1]^2},  
\end{equation} 
which indicates that
$ \kappa_{\rm H} \! \in\! \big(0, (n+1)^2 \big)$. 
Hence, the equilibrium ${\rm P_1}$ is asymptotically stable if 
$\kappa \! \in \! (0,\kappa_{\rm H})$ 
(for which ${\rm Tr}(J({\rm P_1})) \!<\! 0$)
and unstable if $\kappa \! \in \! \big(\kappa_{\rm H}, (n+1)^2 \big)$ 
(for which ${\rm Tr}(J({\rm P_1})) \!>\! 0$). 
Hopf bifurcation occurs from ${\rm P_1}$ at the critical 
point $\kappa = \kappa_{\rm H}$. 
 
Next, in order to determine the stability of the bifurcating limit 
cycles, we need to compute the focus values. To achieve this, 
letting $\kappa \!=\! \kappa_{\rm H}$ and introducing 
the following affine transformation,  
$$
\left(\! \begin{array}{c} x \\ y \end{array} \! \right)
= \left(\!\! \begin{array}{c} 1 - n y_1 \\ y_1 \end{array} 
\!\! \right) + \left[\! \begin{array}{cc}
1 & 0 \\[0.5ex] 
\dfrac{n [(n + 1) y_1 - 1]}{(n + 1) (n y_1 - 1)} 
& \dfrac{-\omega_c [(n + 1) y_1 - 1]}{(n+1) (n y_1 -1)} \\ 
\end{array}
\! \right] \!  
\left(\! \begin{array}{c} u \\ v \end{array} \! \right), 
$$
where
$$ 
\omega_c = \sqrt{ \dfrac{n^2+n+1 - n (n+1)^2 y_1 }{(n+1) y_1-1}}, 
$$
into \eqref{Eqn11} we obtain the following system, 
\begin{equation}\label{Eqn25} 
\begin{array}{rl}  
\dfrac{{\rm d}u}{{\rm d} \tau} =\!\!\! & \omega_c \, v 
- \dfrac{1}{(n+1) Q_1^2 Q_2^2} \big( Q_1 u^2 
- \omega_c Q_1 Q_2 Q_3\, uv 
+ u^3 - \omega_c Q_2\, u^2 v \big), 
\\[2.0ex]  
\dfrac{{\rm d}v}{{\rm d} \tau} =\!\!\! & 
-\, \omega_c \, u 
- \dfrac{n}{(n+1) Q_1^2 Q_2^2\, \omega_c} \big( Q_1 u^2  
- \omega_c Q_1 Q_2 Q_3\, uv 
+ u^3 
- \omega_c Q_2\, u^2 v \big), 
\end{array} 
\end{equation} 
whose linear part is in the Jordan canonical form, where 
$$
Q_1 = 1-n y_1, \quad Q_2 = (n+1) y_1 -1, \quad Q_3 = (n+1)^2 y_1 - n. 
$$
Note that $\omega_c>0$ requires that 
\begin{equation}\label{Eqn26} 
y_1 < \dfrac{n^2+n+1}{n(n+1)^2} = \dfrac{1}{n} - \dfrac{1}{(n+1)^2}.  
\end{equation}
As a matter of fact, ${\rm P_1}$ is a saddle point when 
$y_1  > \frac{1}{n} - \frac{1}{(n+1)^2}$, and a B-T bifurcation point
when $y_1 = \frac{1}{n} - \frac{1}{(n+1)^2}$. \\[1.0ex]  

Combining the condition $y_1 > \frac{n+2}{(n+1)^2} $ 
in \eqref{Eqn23} and that in \eqref{Eqn26} yields that 
$ \frac{n+2}{(n+1)^2} <  \frac{1}{n} - \frac{1}{(n+1)^2}$, 
which in turn gives $n <\! 1$.
Therefore, it follows from \eqref{Eqn23} and \eqref{Eqn26} 
that the restriction conditions on $y_1$ and $\kappa$ are given by 
$$
y_{\rm 1L} < y_1  < y_{\rm 1U}, 
$$
where
\begin{equation}\label{Eqn27} 
y_{\rm 1L} = \dfrac{n+2}{(n + 1)^2}, \quad
y_{\rm 1U} = \min\Big\{ \dfrac{1}{n}-\dfrac{1}{(n + 1)^2},\ 
\dfrac{1}{n + 1} + \dfrac{1}{\kappa_{\rm H}} \Big\}, \ \ (0<n<1), 
\end{equation} 
under which $\,Q_i \!>\! 0, \, i=1,2,3$, and 
$n \!+\! 1 \!-\! n\, Q_3 \!>\!0$. 

Now, applying the Maple program \cite{Yu1998} for computing the normal 
form of Hopf and generalized Hopf bifurcations to system \eqref{Eqn25},
we obtain the following focus values, 
\begin{equation}\label{Eqn28} 
\hspace{-0.10in}
\begin{array}{rl}
v_1 = \!\!\! & \dfrac{v_{\rm 1a} }{8 (n+1) Q_1 
Q_2^3\, [n+1 -n Q_3]},
\\[2.5ex]
v_2 = \!\!\! & \dfrac{-\, v_{\rm 2a}}{192 (n+1)^3 Q_1^3
Q_2^6\, [n+1 -n Q_3]^3 }, 
\\[2.5ex] 
v_3 = \!\!\! & \dfrac{ v_{\rm 3a}}{18432 (n+1)^5 Q_1^5
Q_2^9\, [n+1 -n Q_3]^5 }, 
\\[0.0ex]
\vdots \! &  \vspace{-0.00in}
\end{array}
\end{equation} 
showing that $v_i$ and $v_{i {\rm a}} $ ($i=1,2,3$) have the same sign. 
Here, $ v_{\rm 1a},\, v_{\rm 2a}, \, \cdots$ 
are polynomials in $ n $ and $y_1$. In particular,
\begin{equation}\label{Eqn29} 
\begin{array}{rl} 
v_{\rm 1a} = \!\!\! & 
n (n+1)^4 y_1^2 - 2 (n^2+n+1) (n+1)^2 y_1 + (n^3+2 n^2+5 n+3),
\\[1.0ex] 
v_{\rm 2a} = \!\!\! & 
n^3 (n \!+\! 1)^{12} (28 n \!+\! 27) y_1^7
\!-\! n^2 (n \!+\! 1)^{10} (196 n^3 \!+\! 350 n^2 \!+\! 267 n \!+\! 108) y_1^6
\\[0.5ex] 
&+\,n (n+1)^8 (588 n^5+1533 n^4+2101 n^3+1683 n^2+672 n+135) y_1^5 
\\[0.5ex] 
&-\, (n+1)^6 (980 n^7+3360 n^6+6500 n^5+7711 n^4+5620 n^3 +2567 n^2
\\[0.5ex] 
& +\, 602 n+54) y_1^4 +(n+1)^4 (980 n^8+4165 n^7+10330 n^6+16174 n^5 
\\[0.5ex] 
& +\, 16730 n^4 +11680 n^3+5215 n^2+1400 n+169) y_1^3 
\\[0.5ex] 
& -\, (n+1)^2 (588 n^9+2982 n^8+8995 n^7+17466 n^6+23190 n^5
\\[0.5ex] 
&+\, 21409 n^4+13397 n^3+5563 n^2+1443 n+192) y_1^2
\\[0.5ex] 
&+\, (n+1) (196 n^9+959 n^8+3138 n^7+6349 n^6+8961 n^5+8410 n^4
\\[0.5ex]
&+\, 4886 n^3+1470 n^2+84 n-36) y_1 -28 n^9-132 n^8-474 n^7
\\[0.5ex] 
&-\,979 n^6-1470 n^5-1291 n^4-460 n^3 +334 n^2+517 n+189.
\end{array}
\end{equation}

To find the maximal number of limit cycles bifurcating from the
Hopf critical point, we need to find the solutions satisfying $v_k=0$ as
many as possible. Since there are only two free parameters
($n,y_1$) in $v_k$, the maximal number of bifurcating limit cycles can be
three. However, in solving practical problems, due to the constraints on 
the parameters, this maximal number is usually not reachable.
Now, let us first consider the possibility of $3$ limit cycles.
Eliminating $y_1$ from the two equations,
$v_{\rm 1a} = v_{\rm 2a} = 0$, yields a solution for $y_1=\bar{y}_1$, where 
\begin{equation}\label{Eqn30} 
\bar{y}_1 = \dfrac{ 23+(1-2 n) \big[ (1-2 n) (1+14 n+24 n^2)+46 n^3 \big] }
{ 4 (4+n)+ 2 n (1-2 n) [(1-2 n) (6+15 n+28 n^2)+55 n^3] }, 
\end{equation} 
and a resultant equation:
\begin{equation}\label{Eqn31} 
\textrm{Res}\, (n) = n (2 n+1) (3 n^2-11 n+11) (2 n-1) =0,
\end{equation} 
which has only one positive solution $n \!=\! \frac{1}{2}$ for which
$ \bar{y}_1 \!=\! \frac{23}{18}$, 
yielding $v_1 \!=\! \frac{3240}{17303} \! \ne \! 0$.  
Hence, bifurcation of $3$ limit cycles 
is not possible, since it requires that $v_1 = v_2 = 0$.

The next best possibility is bifurcation of $2$ limit cycles,
which needs $v_1=0$. The discriminant of quadratic polynomial $v_{\rm 1a}$ is
$$
\Delta_1 = 4 (1-2n) (n+1)^5.
$$
Thus, $\Delta_1 \!<\! 0$ when $n \!>\! \frac{1}{2}$, yielding 
$v_{\rm 1a} \!>\! 0$ and so $v_1 \!>\! 0$. 
When $n \!=\! \frac{1}{2}$, the condition 
$ \frac{n+2}{(n+1)^2} \!<\! y_1 \!<\! \frac{1}{n} \!-\! 
\frac{1}{(n+1)^2} $ is reduced to 
$\frac{10}{9} < y_1 < \frac{14}{9}$, and then 
$v_1$ becomes 
$$ 
v_1 = \dfrac{14-9 y_1}{3 m^2 (2-y_1) (3y_1-2)^3} > 0.  
$$
The above result indicates that when $\frac{1}{2} \! \leqslant \! 
n \!<\! 1$, the Hopf bifurcation is subcritical and bifurcating 
limit cycles are unstable. When $n < \frac{1}{2}$,
solving the quadratic polynomial equation
$v_{\rm 1a}=0$ yields two real positive solutions,
\begin{equation}\label{Eqn32} 
y_{1 \pm} = \dfrac{1}{n (n+1)^2}
\left[ n^2+n+1 \pm \sqrt{ (1-2 n) (n+1)} \right]. 
\end{equation} 
Hence, $v_{\rm 1a} < 0$ for $ y_1 \in (y_{1-}, y_{1+})$ and 
$v_{\rm 1a} > 0$ for $ y_1 \in (0, y_{1-}) \bigcup (y_{1+}, \infty)$. 

Moreover, it is easy to show that \vspace{-0.05in}
$$
y_{1-} < \dfrac{1}{n} - \dfrac{1}{(n+1)^2} < y_{1+} ,
$$
and 
$$
y_{1-} \gtrless y_{\rm 1L} = \dfrac{n+2}{(n+1)^2}
\ \ \Longleftrightarrow \ \ \sqrt{(1 - 2 n)(n + 1)}
\lessgtr 1 - n \ \ \Longleftrightarrow \ \ n (3n - 1) \gtrless  0. 
$$
Therefore, for $n \! \leqslant \! \frac{1}{3}$, 
the condition $y_{1-} \! \leqslant \! y_{\rm 1L}$ implies that any feasible 
values of $y_1$ for Hopf bifurcation satisfy
$$ 
y_{1-} \leqslant  y_{\rm 1L} <  y_1 < y_{\rm 1U}  
\leqslant \dfrac{1}{n} - \dfrac{1}{(n+1)^2} < y_{1+}, 
$$ 
and so $v_{\rm 1a} \!<\! 0$, that is, 
$v_1 \!<\! 0$. Hence, the Hopf bifurcation is supercritical and 
bifurcating limit cycles are stable for $0 \!<\! n \!\leqslant \! 
\frac{1}{3}$. Summarizing the above results, we have shown that 
one limit cycle can be generated from Hopf bifurcation for 
$ n \! \in \! \big(0,\frac{1}{3} \big] \bigcup \big[\frac{1}{2},1 \big)$, and 
the Hopf bifurcation is supercritical if 
$ n \! \in \! \big(0,\frac{1}{3} \big]$, and subcritical 
if $ n \! \in \! \big[\frac{1}{2},1 \big)$.  
The conditions under which two limit cycles can occur from a Hopf bifurcation 
are given by 
$$
\dfrac{1}{3} < n < \dfrac{1}{2}, \quad 
y_1 = y_{1-} \ \ \Longrightarrow \ \ v_1 = 0, \ \ \textrm{and thus} \ \ 
y_1 \gtrless y_{1-} \ \ \Longleftrightarrow \ \ v_1 \lessgtr 0. 
$$
Further, it can been shown by using \eqref{Eqn27} 
that for $\frac{1}{3} \!<\! n \!<\!  \frac{1}{2}$, the following holds: 
$$ 
\begin{array}{rl} 
& \dfrac{1}{n} - \dfrac{1}{(n+1)^2} < \dfrac{1}{n+1} 
+ \dfrac{1}{\kappa_{\rm H}} 
\\[2.5ex] 
\Longleftrightarrow &  
 \dfrac{1}{n} - \dfrac{1}{(n+1)^2} - \dfrac{1}{n+1} 
- \dfrac{1}{\kappa_{\rm H}}<0
\\[2.5ex]
\Longleftrightarrow & 
 - \, \dfrac{n (n+1)^4 y_1^2-(2 n^2+2 n+1) (n+1)^2 y_1  
        +n^3+2 n^2+2 n+2} {n (n+1)^2 [(n+1)^2 y_1-(n+2)]} < 0, 
\end{array} 
$$
because the discriminant of the numerator of the left-hand 
side of the last inequality in the above is equal to $-(n\!+\!1)^4 (4n\!-\!1) 
\!<\! 0 $ for $ n \! \in\! (\frac{1}{3},\frac{1}{2})$. 
This implies that $ y_{\rm 1U} \!=\! \frac{1}{n} \!-\! \frac{1}{(n+1)^2}$ 
and $y_{\rm 1L} \!<\! y_{1-} \!<\! y_{\rm 1U} \!<\! y_{1+}$ 
for $ n \! \in \! (\frac{1}{3},\frac{1}{2})$. 
Consequently, when $n \! \in \! \big(\frac{1}{3},\frac{1}{2}\big)$, 
the Hopf bifurcation is supercritical for $y_1 \! \in \! (y_{1-},y_{\rm 1U})$ 
($v_1 \!<\! 0$), and subcritical for $y_1 \! \in \! (y_{\rm 1L},y_{1-})$ 
($v_1 \!>\!0$). 

To transform the above stability expression in $y_1$ back to that 
in the parameter $e$, we substitute 
$\kappa \!=\! \kappa_{\rm H}$ and $y_1$ into $e$ in \eqref{Eqn20} to obtain 
$$ 
e(y_1) = \dfrac{1}{(1- n y_1) [(n+1)^2 y_1 -(n+2)]} >0, \quad 
\textrm{for} \ \ y_{\rm 1L} < y_1 < y_{\rm 1U} .  
$$  
Then, substituting $y_1=y_{1-}$ into $e(y_1) $ yields 
$e^* \!=\! e(y_{1-})$. Further, we can show that 
$$
\dfrac{{\rm d}\, e (y_1)}{{\rm d} y_1} = 
\dfrac{2 n (n \!+\! 1)^2 y_1 \!-\! (2 n^2 \!+\! 4 n \!+\! 1)}
      {(1\!-\! n y_1)^2 [(n \!+\! 1)^2 y_1 \!-\! (n \!+\! 2)]^2} 
\left\{\!\! 
\begin{array}{ll}
< 0, & \! \textrm{for} \ \ y_1 \!<\! y_{\rm 1 min}, \\[0.0ex] 
> 0, & \! \textrm{for} \ \ y_1 \!>\! y_{\rm 1 min}, 
\end{array} 
\right.  \ \ y_{\rm 1 min} = \dfrac{2n^2 \!+\! 4n \!+\! 1}{2n(n+1)^2}.   
$$
It is easy to prove that $y_{\rm 1L} \!<\! y_{\rm 1 min} \!<\! 
y_{\rm 1U}$, and $ y_{1-} \!<\! y_{\rm 1 min}$. 

In addition, a direct computation leads to that 
$$
\begin{array}{rl}  
e^* - e(y_{\rm 1U}) = e^* - e_3 
= \dfrac{ (1 \!+\! n)^2 (1 \!-\!2 n) \big[(1\!+\!n) (5n\!-\!2) 
\!-\! (1 \!-\! n) \sqrt{(1 \!+\! n) (1 \!-\! 2n)}
\, \big]} {(1-n) (3 n-1) (3 n^2+n-1)}>0
\end{array}
$$ 
for $\frac{1}{3} \!<\! n \!<\! \frac{1}{2}$. 
This clearly indicates that 
$e \!<\! e^*$ (respectively $ e \! >\! e^*$) when 
$y_1 \!>\! y_{1-}$ (respectively $ y_1 \! <\! y_{1-}$).  
Hence, for $n \! \in \! \big(\frac{1}{3},\frac{1}{2}\big)$,
the Hopf bifurcation is supercritical (respectively subcritical) 
if $e \!<\! e^*$ (respectively $e \!>\! e^*$).  
This finishes the proof for the first part of Theorem \ref{Thm2.2}. 
   
Now, based on the above established results, we consider the Hopf bifurcations 
in Lemma~\ref{Lem1}. 
For the Case (I-b), it is obvious that the two Hopf bifurcations are 
supercritical since $n \! \leqslant \! \frac{\sqrt{2}-1}{2} \!<\! \frac{1}{3}$, 
and bifurcating limit cycles are stable.  
For the Case (II-c)(i), with the condition $n\!<\! \frac{1}{2}$ 
and $e_3 \!<\! e \!<\! e_4 $, we know that 
the Hopf bifurcation is supercritical for $0 \!<\! n \! \leqslant \! 
\frac{1}{3}$. When $ \frac{1}{3} \!<\! n \!<\! \frac{1}{2}$, 
similar to prove $e_3 \!<\! e^*$, we can show that 
$e_4 \!<\! e^*$, which yields $e \!<\! e^*$. 
Thus, the Hopf bifurcation is also 
supercritical for $ \frac{1}{3} \!<\! n \!<\! \frac{1}{2}$.  
For the Case (II-c-ii) with the condition $n\!<\! 1$ and 
$e \!>\! \max\{e_3,e_4\}$, 
the Hopf bifurcation is supercritical for $0 \!<\! n \!\leqslant \! 
\frac{1}{3}$, and subcritical for $\frac{1}{2} \!\leqslant \! n 
\!<\! 1$. When $\frac{1}{3} \!<\! n \!<\! \frac{1}{2}$, it has been shown 
in the above that $ e \!>\! \max\{e_3,e_4\}$,
the Hopf bifurcation is supercritical for 
$\max\{e_3,e_4\} \!<\! e \!<\! e^*$, and subcritical for 
$ e \!>\! e^*$.

This finishes the proof for Theorem \ref{Thm2.2}.   

To prove Theorem~\ref{Thm2.3}, note that 
we have shown that $v_1 \!=\! 0$ at $y_1 \!=\! y_{1-}$ (or at 
$e \!=\! e^*$), indicating that two limit cycles 
can bifurcate from the Hopf critical point $\kappa \!=\! \kappa_{\rm H}$ 
if $e \!=\! e^*$. To complete the proof for 
Theorem \ref{Thm2.3}, we need to verify $v_2 \! \ne \!0$ when 
$v_1 \!=\! 0$ (i.e., at $y_1 \!=\! y_{1-}$ 
or $ e \!=\! e^*$). 
Substituting the solutions $y_{1-}$ into $v_2$ we obtain 
$$
\left. v_2 \right|_{v_1=0}
= \dfrac{-\, \tilde{v}_2}{48 n^3 (1 \!-\! 2 n) (n \!+\! 1)^9
   [\sqrt{(1 \!-\! 2 n) (n \!+\! 1)} \!+\! n]^6
   [\sqrt{(1 \!-\! 2 n) (n \!+\! 1)} \!-\! 1]^6}, 
$$
where
$$
\begin{array}{rl}
\tilde{v}_2 = \!\!\! &
[(1-2 n) (8+8 n+15 n^2)+32 n^3] \sqrt{ (1-2 n) (n+1)}
\\[1.0ex]
& -\, [ (1-2 n) (8+4 n+2 n^2+11 n^3)+20 n^4],
\end{array}
$$
whose sign is the same as that of 
$$
\begin{array}{rl}
\tilde{\tilde{v}}_2 = \!\!\! &
[(1-2 n) (8+8 n+15 n^2)+32 n^3]^2 (1-2 n) (n+1)
\\[1.0ex]
& -\, [ (1-2 n) (8+4 n+2 n^2+11 n^3)+20 n^4]^2
\\[1.0ex]
= \!\!\! & -\, n^4 (2 n+1)^2 (3 n^2-11 n+11) < 0,
\end{array}
$$
yielding $\tilde{v}_2 \!<\! 0$, that is, $\left. v_2 \right|_{v_1=0} \!>\! 0$.
This shows that the codimension of the Hopf bifurcation is indeed two.
Moreover, the outer bifurcating limit cycle is unstable and the inner one
is stable, and both them enclose an unstable focus ${\rm P_{1-}}$.

Finally, we want to show that the generalized Hopf bifurcation, leading 
to bifurcation of $2$ limit cycles under the condition 
$\frac{1}{3} \! < \! n \! < \frac{1}{2}$, can 
only occur in Case (II-c-ii). First, it is easy to see that it is not possible 
for the Case (I-b) since it requires $n \!< \! \frac{\sqrt{2}-1}{2}$. 
Then, with the condition $\frac{1}{3} \! < \! n \! < \frac{1}{2}$, 
we can show that $e \!>\! e_3$ and $e \!>\! e_4$. 
By using \eqref{Eqn20} with $y_1 \!=\! y_{1-}$ given in \eqref{Eqn32} 
and $\kappa = \kappa_{\rm H}$ given in \eqref{Eqn24}, we obtain 
$$ 
e = \dfrac{(n+1)^2 [1-n+ \sqrt{(1-2n) (n+1)}]}
                 {(3n-1) [n + \sqrt{(1-2n) (n+1)}]}.  
$$ 
Then, a direct computation shows that 
$$ 
\begin{array}{ll}  
e - e_3 = \dfrac{(n+1)^2 (1-2 n) [1+n+2 \sqrt{(1-2 n) (n+1)}]}  
        {(1-n) (3n-1) [n+ \sqrt{(1-2n)(n+1)]}} > 0,  
\\[2.5ex] 
e - e_4 
= \dfrac{(n+1)^2 (1-2 n) [2n+ \sqrt{(1-2 n) (n+1)}]}  
        {n (3n-1) [n+ \sqrt{(1-2n)(n+1)]}} > 0, 
\end{array}
$$ 
for $\frac{1}{3} \! < \! n \! < \frac{1}{2}$. 
This indicates that $2$ limit cycles cannot occur in Cases (II-c-i) 
and (II-d), and can only bifurcate in 
Case (II-c-ii), for which $ \varepsilon \!>\! \max\{\varepsilon_3,
\varepsilon_4\}$, and it can be shown that 
$ \kappa_{\rm SN} \!<\! \kappa_{\rm H_+} \!<\! \kappa^*$ as follows:
$$ 
\begin{array}{rl}
\kappa_{\rm H_+} - \kappa_{\rm SN} 
\!\!\! & = \dfrac{ 2 e n (1-n^2)^2 (e-e_3)^2}
             {(e+ e_1)^2 \{ e [n (e+ e_4)-e_2] +n (e-e_4)
              \sqrt{e (e-e_2)} \}} >0, \\[2.5ex]  
\kappa^* - \kappa_{\rm H_+} 
\!\!\! & = \dfrac{ 2 e n (1+n) (1-n^2)^2 (e-e_3) (e-e_4)}
             {(e+ e_1)^2 \{e (e- e_4) +n (e-e_4) \sqrt{e (e-e_2)} \}} >0, 
\end{array} 
$$ 
since for this case, we have 
$e > e_4 > e_3 > e_2 > e_1 $, and 
$$ 
n(e + e_4) - e_2 > 2n e_4 - e_2 = 2(1-2n)(n+1)^2 > 0. 
$$  

The proof for Theorem~\ref{Thm2.3} is complete. 
\end{proof}

\subsection{Simulations} 

Simulations for the stable limit cycles in the Cases (I-b), (II-c-i) and 
(II-d), as well as an unstable limit cycle in the Case (II-c-ii) with 
$n \!=\! \frac{3}{4} \!\in\! \big[ \frac{1}{2},1\big)$ have been shown 
in \cite{ZengYu2021}. 
In \cite{LiZhouWuMa2007}, the authors used system \eqref{Eqn8} 
to give four simulations to show 
the existence of bifurcating limit cycles, among them three are stable, and 
one is unstable. The parameter values for these simulations are given below, 
with our formula $v_1$ in \eqref{Eqn28} and \eqref{Eqn29}
to confirm the stability (with the bold-faced numbers for the values of $n$). 
In order to keep consistent with the results given in \cite{LiZhouWuMa2007}, 
we also use \eqref{Eqn8} for our simulation, which will give the 
same results obtained by using \eqref{Eqn11} under the transformation 
\eqref{Eqn10}. 
$$
\!\! \begin{array}{llll}
\textrm{Stable LC}: &\!\! (m,n,\varepsilon,k) =&\!\!\!
(1.6,\, {\bf 0.30},\, 1.50,\, 0.25), & v_1 = -\,0.082227,  
\\[1.0ex]
&\!\! &\!\!\! (2.0,\, {\bf 0.25},\, 1.50,\, 0.15625), & v_1 = -\,0.038194, 
\\[1.0ex]
&\!\! &\!\!\! (1.5,\, {\bf 0.30},\, 1.55,\, 0.25),& v_1 = -\,0.135503,   
\\[1.0ex]
\textrm{Unstable LC}: &\!\! (m,n,\varepsilon,k) =&\!\!\!
(11.07825,\, {\bf 0.4771},\, 0.995,\, 0.0295), & v_1 = +\, 0.001773.  
\end{array}
$$

\begin{figure}[!h]
\vspace*{-1.90in} 
\begin{center}
\hspace*{-0.12in} 
\begin{overpic}[width=1.00\textwidth,height=0.941\textheight]{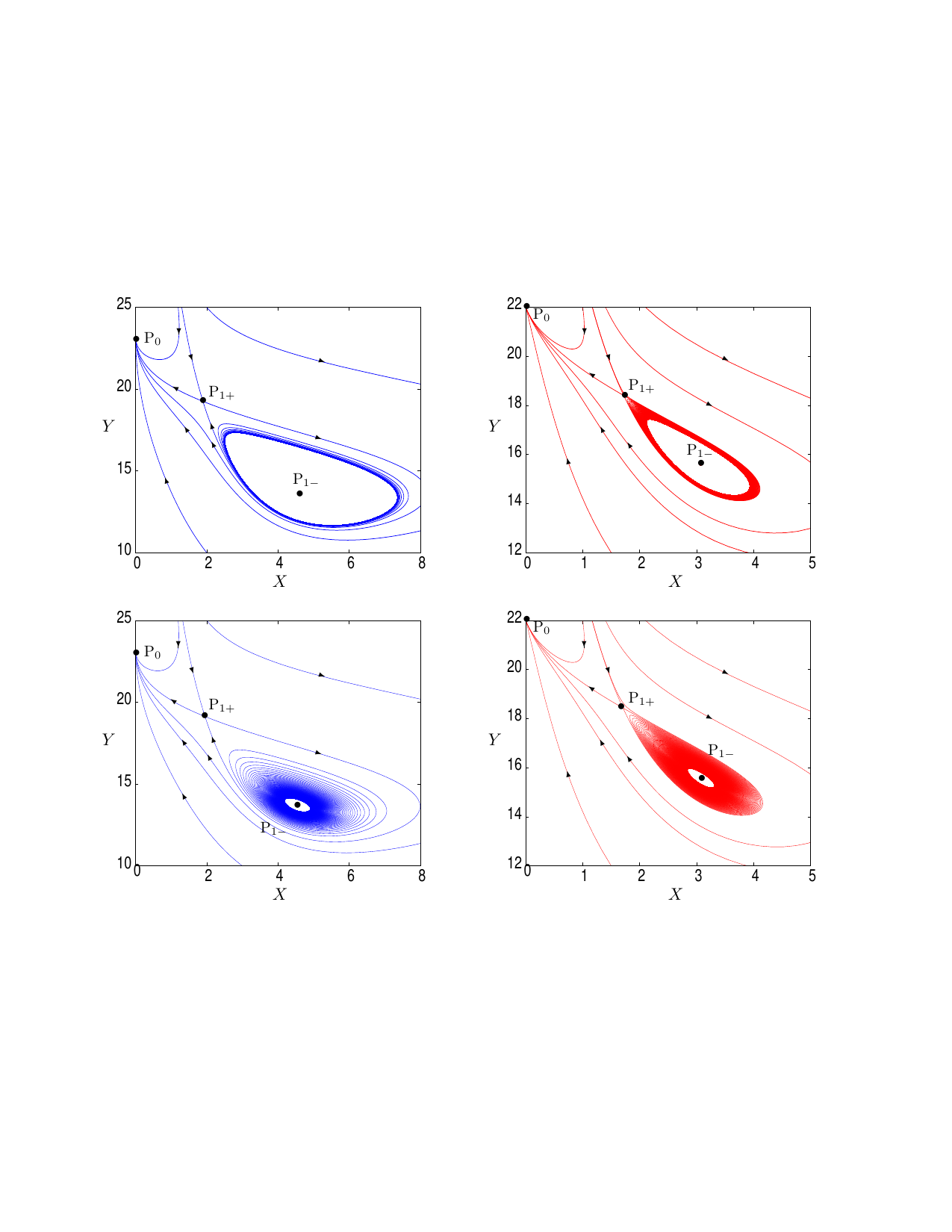} 
\end{overpic}

\vspace{-2.20in} 

\caption{Simulated phase portraits for the Case (II-c-ii) of 
the model \eqref{Eqn8} 
with $n\!=\! 0.4771$: (a) an unstable limit cycle 
for $m\!=\! 11.07825$, $\varepsilon \!=\! 0.995  $ 
and $ k \!=\! 0.0295 $ \cite{LiZhouWuMa2007}; (b) a stable limit cycle for 
$m\!=\! 10.5$, $\varepsilon \!=\! 0.43 $ and $k\!=\! 0.0509 $;
(c) an unstable limit cycle 
for $m\!=\! 11.07825$, $\varepsilon \!=\! 0.995  $ 
and $ k \!=\! 0.029225  $; and (d) a stable limit cycle for 
$m\!=\! 10.5$, $\varepsilon \!=\! 0.43  $ and $k\!=\! 0.05094 $.} 
\label{fig2} 
\end{center}
\end{figure}

It is not surprising to see that the first three values of $n$
are less than $\frac{1}{3}$, yielding stable limit cycles. 
For the last example, $n\!=\! 0.4771 \! \in \! \big(\frac{1}{3},
\frac{1}{2} \big)$, and using the formula \eqref{Eqn19}, we obtain 
$\varepsilon^* \!=\! 0.485004 \!<\! 
\varepsilon \!=\! 0.995 $, yielding $v_1 \!=\! 0.001773$ and thus 
the Hopf bifurcation is subcritical and the bifurcating limit cycle is 
unstable, as shown in Figure~\ref{fig2}(a). 
However, for this case when $ n \in (\frac{1}{3},\frac{1}{2})$,  
the Hopf bifurcation is not necessarily subcrtical. 
Parameters can be changed to get a supercritical Hopf bifurcation. 
For example, keeping $n\!=\! 0.4771$ unchanged and the following 
values for other parameter values, we obtain 
$$
\textrm{Stable LC}: \ \ (m,n,\varepsilon,k) = 
(10.5,\, {\bf 0.4771},\, 0.43,\, 0.0509), 
$$ 
which yields $\varepsilon^* \!=\! 0.511714 \!>\! 
\varepsilon = 0.43 $ and $v_1 \!=\! -0.000831$, implying that 
the Hopf bifurcation is supercritical and the bifurcating limit cycle is 
stable, see Figure~\ref{fig2}(b). 
It can be seen that the limit cycles shown in Figures~\ref{fig2}(a) and 
\ref{fig2}(b) are pretty large and so the Hopf bifurcation prediction 
for the amplitude based on the normal forms might be not applicable. 
The Hopf critical value for Figures~\ref{fig2}(a) and 
\ref{fig2}(b) are $k_{\rm H} \approx 0.029221$ 
and $k_{\rm H} \approx 0.050942$, respectively, which yields 
the perturbations $\mu \approx 0.000279$ ($\frac{\mu}{k_{\rm H}}=0.96 \%$) 
and $\mu \approx 0.000042$ ($\frac{\mu}{k_{\rm H}}=0.08 \%$) 
for Figures~\ref{fig2}(a) and \ref{fig2}(b), respectively.    
This indicates that the perturbations for both cases are actually 
small, but they yield the approximation for the amplitudes of the 
two small limit cycles as $ r= 2.4734 $ and $ r = 0.8514$, respectively. 
They are pretty large, but it is interesting that the normal forms 
still predict their correct stability. For a comparison, we present 
another two simulations depicted in Figures~\ref{fig2}(c) and \ref{fig2}(d), 
corresponding to $k=0.029225$ 
(giving $\mu \approx 0.000004$ and $\frac{\mu}{k_{\rm H}}=0.015 \%$)
and $k=0.05094$   
(giving $\mu \approx 0.000002$ and $\frac{\mu}{k_{\rm H}}=0.005 \%$),
respectively. Both bifurcating limit cycles are quite small, with 
unstable and stable ones for the former and latter cases, respectively. 
The normal forms for these two cases predict the amplitudes for the 
two limit cycles as $r \approx 0.3142$ and $r \approx 0.2055$, respectively,
showing that Hopf bifurcation theory is perfectly applicable.   

For the two limit cycles arising from 
Hopf bifurcation, there exists an infinite set of parameter values. 
For example, we first choose $m=2$ since it is free, and then take 
$$
n=\frac{5}{11} \in \Big(\dfrac{1}{3},\, \dfrac{1}{2} \Big), 
$$ 
yielding the values for the critical point, 
$$ 
y_1 = y_{1-} = \dfrac{Y_{1-}}{2} = \dfrac{1727}{1280}, \quad 
k_{\rm H} = \dfrac{1280}{5929}, \quad  
\varepsilon^* = \dfrac{320}{99},  
$$ 
for which $v_0 \!=\! v_1 \!=\! 0$, $v_2 \!=\! 
\frac{46661632000}{845676707337}$. 
\vspace{0.03in} It is easy to verify that the above parameter values 
belong to the case (II-c-ii) in Lemma~\ref{Lem1}. Only one Hopf critical 
point exists in this case.  

Then, perturbing $y_1$ and $k$ as follows:  
$$ 
y_1=y_{1-} + \epsilon_1, \quad 
k=k_{\rm H} + \epsilon_2, \quad \textrm{where} \ \  
\epsilon_1 = \dfrac{1}{100}, \ \ \epsilon_2 = -\,\dfrac{1}{50000}, 
$$ 
we obtain 
$$ 
k = \dfrac{203821518599}{924070050000}   
\quad \textrm{and} \quad 
\varepsilon = \dfrac{3407490109063040}{1096763591581219}. 
$$ 
For these perturbed parameter values, we have the normal form for 
the amplitude of oscillation up to $5$th-order, given by 
$$ 
\!\!\! \begin{array}{rl} 
\dot{r} \!\!\! &= r \, \big( v_0 + v_1 r^2 + v_2 r^4 \big) 
\\[1.0ex] 
&= r \big ( \tfrac{4299}{220000000} 
\!-\! \tfrac{216980684800000}{83938145042658897}\, r^2 
\!+\! \tfrac{14817759528898477514254458827898880000000}
{200973840260810036901676626005378422866729} \, r^4 \big). 
\end{array}  
$$ 
Setting $\dot{r}=0$ gives the approximate solutions for the 
amplitudes of the two limit cycles: 
$$ 
r_1 \approx  0.105015, \quad r_2 \approx 0.155024. 
$$ 
The simulation of this numerical example is shown in Figure~\ref{fig3}.
It is seen from this figure that the amplitudes of the simulated two 
limit cycles agree very well with the analytical predictions given above.

\begin{figure}[!h]
\vspace{-1.80in} 
\begin{center}
\hspace*{-0.10in} 
\begin{overpic}[width=1.06\textwidth,height=0.941\textheight]{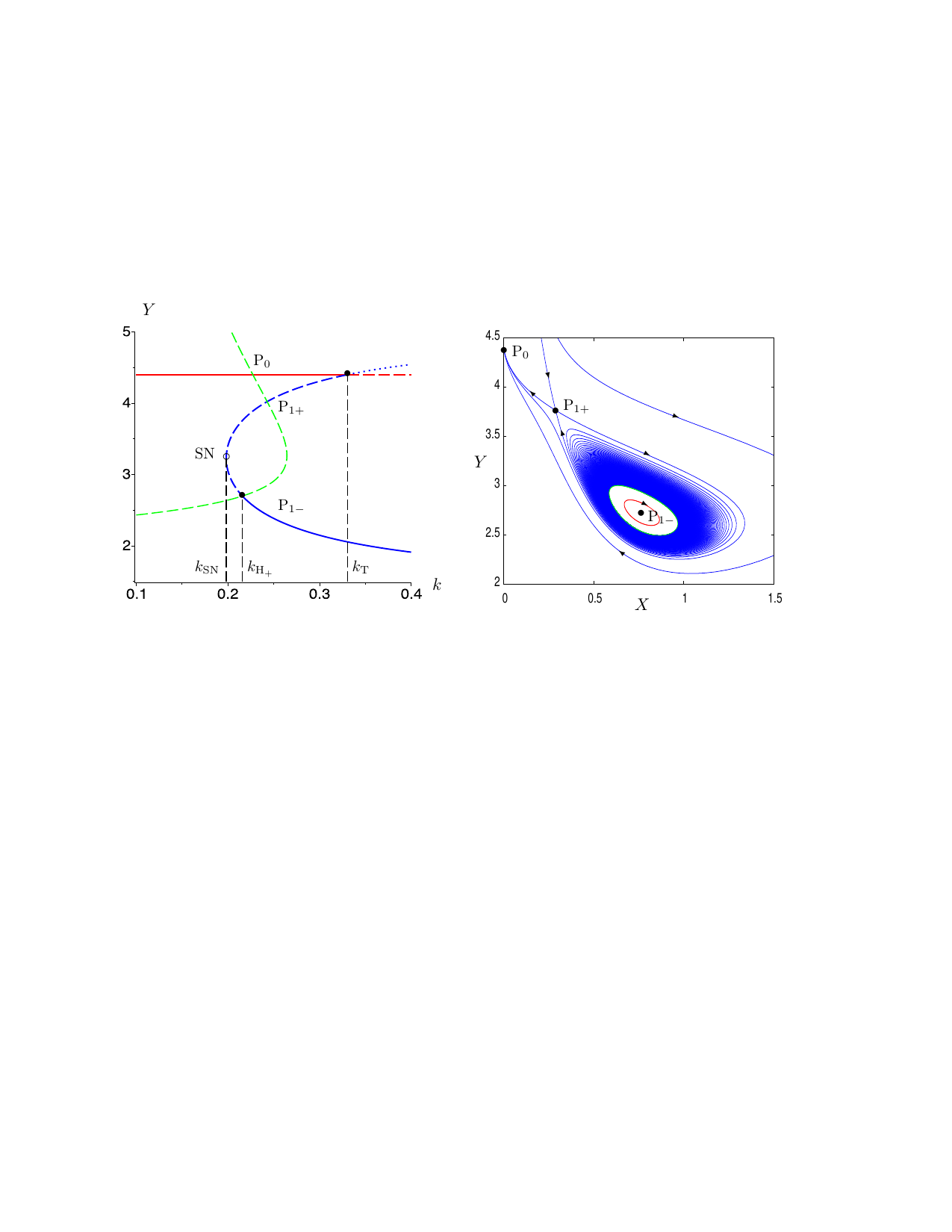} 
\end{overpic}

\vspace{-3.90in} 

\caption{(a) Bifurcation diagram for the epidemic model \eqref{Eqn8} 
projected on the $k$-$y_1$ plane 
for $m=2$, $n=\frac{5}{11}$ and $\varepsilon \!=\! \frac{320}{99}$,  
corresponding to the Case (II-c-ii) 
in Lemma~\ref{Lem1} having one Hopf critical 
point, with $k_{\rm SN} \!=\! \tfrac{57600}{290521}$, 
$k_{\rm H_+} \!=\! \frac{1280}{5929} $ and
$k_{\rm T} \!=\! \frac{40}{121}$; and    
(b) simulation of $2$ limit cycles for  
perturbed values $ k = \frac{203821518599}{924070050000} $ 
and $\varepsilon=\frac{3407490109063040}{1096763591581219}$,  
with the outer one unstable 
(in green color) and inner one stable (in red color).}
\label{fig3} 

\end{center}
\end{figure}

\section{B-T bifurcation of system \eqref{Eqn11}}

In this section, we present an analysis on the B-T bifurcation 
of system \eqref{Eqn11}. In \cite{LiZhouWuMa2007}, the classical method 
was used to find the condition for codimension-$2$ B-T bifurcation 
for system \eqref{Eqn8}. 
In \cite{LiLiMa2015}, the six-step transformation approach was applied to 
analyze the codimension-$3$ B-T bifurcation of system \eqref{Eqn8}. 

\subsection{Determining the codimension of B-T bifurcation}

Here, we apply the SNF theory \cite{Yu1999,YL2003,YuZhang2019} to determine 
the codimension of B-T bifurcation. 
To achieve this, we first use $\kappa$ to solve the equation $g=0$, 
where the function $g$ is given in \eqref{Eqn14}, and then use 
$y_1$ and $e$ to solve ${\rm Tr}(J({\rm P_1}))
=\det (J({\rm P_1})) = 0$ to obtain
\begin{equation}\label{Eqn33} 
\kappa = n(1 - n)(1+n)^2, \quad 
e = \dfrac{(1+n)^2}{1 \!-\! n}, \quad 
x_1 = \dfrac{n}{(1+n)^2}, \quad 
y_1 = \dfrac{n^2 + n +1 )}{n(1+n)^2}, 
\end{equation} 
which requires $0<n<1$. Then, introducing the affine transformation, 
\vspace{0.10in}
\begin{equation}\label{Eqn34} 
\left(\begin{array}{c} x \\ y \end{array} \right)
= \left(\begin{array}{c} \dfrac{n}{(1+n)^2} \\[2.0ex] 
\dfrac{n^2+n+1}{n(1+n)^2} \end{array} \right)
+ \left[ \begin{array}{rc} n & 1 \\ -1 & 0 \end{array} \right]
\left(\begin{array}{c} u \\ v \end{array} \right)
\end{equation} 
into \eqref{Eqn11} yields
\begin{equation}\label{Eqn35}
\!\!\!\! \begin{array}{rl}
\dfrac{{\rm d} u}{{\rm d} \tau} = \!\!\! & v, \\[2.0ex]
\dfrac{{\rm d} v}{{\rm d} \tau} = \!\!\! &
- (n+1)^3 (nu+v) [n^2 u + (n - 1) v]
- n (n+ 1)^4 (n u+v)^2 [(n+1)u+v]. 
\end{array}
\end{equation}
Next, applying the $5$th-order change of variables:
$$
\begin{array}{rl}
u =\!\!\!& y_1 - \dfrac{(n \!+\! 1)^2 (2 n^2 \!-\! 1)}{4}\, y_1^2
- \dfrac{(n\!+\! 1) (n^2 \!+\! n \!+\!1) 
(2 n^2 \!-\! 3 n \!-\! 1) (2 n^2 \!+\! 4 n \!+\! 1)}{12 n^2 }\, y_1 y_2
\\[2.0ex]
& -\, \dfrac{(n+1) (20 n^5-80 n^3+16 n^2+5 n+4)}{240 n^4}\, y_2^2
+ \cdots 
\\[2.0ex]
v = \!\!\! & y_2
- (n-1) (n+1)^3\, y_1 y_2
- \dfrac{(n+1) (2n^2-3n-1)(2n^2+4n+1)}{12 n^2 }\, y_2^2 + \cdots
\end{array}
$$
and the time scaling,
$$
\tau = \Big[ 1 + \dfrac{(n+1)^2}{2 m} y_1 + t_{30}\, y_1^3 \Big]\, \tau_1, 
$$ 
where $t_{30}$ is a function in $n$, 
into \eqref{Eqn11} yields the SNF up to $5$-th order as follows:
\begin{equation}\label{Eqn36}
\begin{array}{ll} 
\dfrac{{\rm d} y_1}{{\rm d} \tau_1} = y_2, \\[2.0ex] 
\dfrac{{\rm d} y_2}{{\rm d} \tau_1} = c_{20}\, y_1^2 + c_{11}\, y_1 y_2
+ c_{31}\, y_1^3 y_2 + c_{41}\, y_1^4 y_2,
\end{array}
\end{equation}
in which 
$$
\begin{array}{rl}
c_{20} =\!\!\!&-\,n^3 (n+1)^3, \\[1.0ex] 
c_{11} =\!\!\!&-\,n (n+1)^3(2n-1), \\[1.0ex]
c_{31} =\!\!\!&-\,\dfrac{(n+1)^6 (40 n^5+44 n^4-18 n^3+9 n^2-7 n+2)}{40}, 
\\[1.0ex] 
c_{41} = \!\!\!& \cdots, 
\end{array}
$$ 
which shows that $c_{20}<0$, and $c_{11} \ne 0$ if $n \ne \frac{1}{2}$. 
It should be pointed out that since we know $c_{20} \ne 0$ from the 
computation, we can assume the SNF in the form given in \eqref{Eqn36}. 
This implies that higher codimension B-T bifurcation can only occur when 
$c_{11} = 0$, leading to the so-called degenerate cusp B-T bifurcation. 
If $c_{20}$ can be zero, then we need to consider a different form of the 
SNF for the case $c_{20} = 0$.   

When $n = n_0 = \frac{1}{2}$, we have 
\begin{equation}\label{Eqn37} 
\kappa_0 = \frac{9}{16}, \quad e_0 = \frac{9}{2}, \quad 
x_1^0 = \frac{2}{9}, \quad y_1^0 = \frac{14}{9}, 
\end{equation}  
and 
$$ 
c_{20} =  - \dfrac{27}{64 }, \quad 
c_{11} = 0, \quad 
c_{31} =  - \dfrac{729}{1024} \ne 0.  \\[1.0ex] 
$$ 
Therefore, we have the following result. 

\begin{theorem}\label{Thm3.1} 
For system \eqref{Eqn11}, B-T bifurcation occurs from the 
endemic equilibrium ${\rm P_1}$: $ \big(\frac{n}{(n+1)^2}, 
\frac{n^2+n+1}{n(n+1)^2} \big)$ at the critical point 
$(e,\kappa) = \big( \frac{(n+1)^2}{1-n}, 
n(1-n)(n+1)^2 \big)$, with $0<n<1$. Moreover, 
the B-T bifurcation is 
\begin{enumerate}
\item[{\rm (i)}] 
codimension $2$ if $ n \in (0, \frac{1}{2})\bigcup (\frac{1}{2},1)$; or  
\item[{\rm (ii)}] 
codimension $3$ if $ n = \frac{1}{2}$.  
\end{enumerate} 
\end{theorem}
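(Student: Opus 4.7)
The plan is to verify the Bogdanov--Takens conditions at the proposed critical point, then compute the simplest normal form (SNF) and read off the codimension from its coefficients according to the classification rule stated for \eqref{Eqn6}. First I would solve simultaneously $F_2 = 0$, ${\rm Tr}(J({\rm E_2})) = 0$, and $\det(J({\rm E_2})) = 0$ using the formulas \eqref{Eqn20}: treating $k$ as the bifurcation parameter to eliminate it via $F_2 = 0$, and then using $Y_2$ and $\varepsilon$ as the unknowns in the remaining two equations, I should recover precisely the critical values listed in \eqref{Eqn39}. The constraint $0 < n < 1$ arises naturally from requiring $\varepsilon, k, X_2, Y_2 > 0$ in those formulas, consistent with the restriction \eqref{Eqn28} already observed during the Hopf analysis.

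Next, I would translate ${\rm E_2}$ to the origin and apply the linear change of coordinates \eqref{Eqn40} to put the Jacobian in Jordan form $\bigl(\begin{smallmatrix} 0 & 1 \\ 0 & 0 \end{smallmatrix}\bigr)$, producing system \eqref{Eqn41}. From here the strategy is to apply the SNF theory used in the previous section: a nilpotent near-identity change of variables $(u,v) \mapsto (y_1,y_2)$ with polynomial tail, together with a time rescaling of the form $\tau = \bigl[1 + s_1 y_1 + s_3 y_1^3 + \cdots\bigr]\tau_1$, chosen order-by-order so that all admissible resonant terms are removed and the surviving terms match the SNF template \eqref{Eqn6}. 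The coefficients in \eqref{Eqn43} are then obtained by symbolically equating coefficients; only $c_{20}$, $c_{11}$, $c_{31}$ (and $c_{41}$ if needed) must be tracked, since these determine the codimension.

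Once the SNF is in hand, the conclusion follows by inspecting \eqref{Eqn43}. The coefficient $c_{20} = -\,n^3(n+1)^3/m$ is always nonzero on $0 < n < 1$, so we are in the cusp regime. The coefficient $c_{11} = -\,n(n+1)^3(2n-1)/m$ vanishes if and only if $n = 1/2$; for any other $n \in (0,1)$ we immediately obtain codimension~$2$. At $n = 1/2$, substitution into $c_{31}$ gives $-729/(1024\,m^3) \ne 0$, which places the bifurcation precisely in the codimension-$3$ class, as the general criterion stated between \eqref{Eqn6} and Lemma~\ref{Lem1} demands.

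The main obstacle is the degree-five normal form reduction when $n = 1/2$: one must carry the nonlinear transformation and the time-rescaling through third order consistently to isolate $c_{31}$, and the expressions become heavy enough that hand computation is error prone. I would delegate this to a symbolic package (Maple), using the same SNF routines underlying \cite{Yu1999,YL2003,YuZhang2019}, and cross-check by verifying that the lower-order coefficients $c_{20}$ and $c_{11}$ reproduce the values in \eqref{Eqn43}. A subsidiary check is to confirm that the restrictions $n \in (0,1)$, positivity of $X_2, Y_2$, and the feasibility of the B-T conditions \eqref{Eqn28} are all mutually compatible and fall within the parameter region described in Lemma~\ref{Lem1}.
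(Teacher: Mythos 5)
Your proposal is correct and follows essentially the same route as the paper: solve $F_2={\rm Tr}(J({\rm E_2}))=\det(J({\rm E_2}))=0$ for the critical values \eqref{Eqn39} (with $0<n<1$ forced by positivity of $\varepsilon$ and $k$), apply the affine transformation \eqref{Eqn40} to reach \eqref{Eqn41}, and then compute the SNF coefficients \eqref{Eqn43} symbolically, reading off the codimension from $c_{20}\ne 0$, the vanishing of $c_{11}$ exactly at $n=\frac{1}{2}$, and $c_{31}\ne 0$ there. No substantive differences from the paper's argument.
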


\subsection{Codimension-$2$ B-T bifurcation}

In \cite{LiZhouWuMa2007}, with $m$ involved in the parameter set,
a number of transformations were 
applied to obtain the normal form with unfolding up to second order. 
In the following, we apply our one-step transformation approach to 
derive the parametric simplest normal form (PSNF) 
\cite{GY2010,GY2012,GM2015,YuZhang2019}. Let 
\begin{equation}\label{Eqn38}
\kappa = n(1-n)(n+1)^2 + \mu_1, \quad  
e = \dfrac{(n+1)^2}{1-n} + \mu_2, 
\end{equation}
which, together with the transformation \eqref{Eqn34}, is substituted into  
\eqref{Eqn11} to yield the following system up to second-order terms,
\begin{equation}\label{Eqn39}
\begin{array}{rl}
\dfrac{{\rm d} u}{{\rm d} \tau}= \!\!\! & v, \\[2.0ex]
\dfrac{{\rm d} v}{{\rm d} \tau}= \!\!\! & \dfrac{1}{(1+n)^3(1-n)} \, \mu_1
+ \dfrac{n^2(1-n)}{(1+n)^3} \, \mu_2
+ \!\!\!\! \dss\sum_{i+j+k+l=2} \!\!\!\! p_{ijkl}\, u^i v^j \mu_1^k \mu_2^l, 
\end{array}
\end{equation}
where $p_{ijkl}$'s are coefficients expressed in terms of 
the parameter $n$. Next, applying the change of variables,
\begin{equation}\label{Eqn40}
\!\!\!
\begin{array}{rl}
u =\!\!\! & -\, \dfrac{1}{n^3(1+n)^3}\, y_1
      -\dfrac{1}{2 n^4 (1+n)^4}\, \beta_1
      + \dfrac{1}{n(1+n)^3}\, \beta_2
      - \dfrac{n^2-2}{4 n^7 (n+1)^5}\, \beta_1 y_1
\\[2.0ex]
& -\, \dfrac{ 2 n^2- n+1}{n^4(1+n)^4 (1-n)}\, \beta_2 y_1
 + \dfrac{ 1-n}{2n^6(1+n)^3}\, y_1^2
  - \dfrac{2n^3+n^2-5n-2}{6 n^7(1+n)^5}\, y_1 y_2,  
\\[2.5ex]
v =\!\!\! & -\, \dfrac{1}{n^3(1+n)^3}\, y_2
      -\dfrac{2n^3+n^2-5n-2}{6 n^7(1+n)^5}\, \beta_1 y_1
- \dfrac{n^2-2}{4 n^7(1+n)^5}\, \beta_1 y_2
\\[2.0ex]
& +\, \dfrac{2n^2-n+1}{n^4(1+n)^4(1-n)}\,\beta_2 y_2
+ \dfrac{1-n}{n^6(1+n)^3}\, y_1 y_2
  -\dfrac{2n^3+n^2-5n-2}{6 n^7(1+n)^5}\, y_2^2, 
\end{array}
\end{equation} 
and the parametrization,
\begin{equation}\label{Eqn41}
\begin{array}{rl}
\mu_1 =\!\!\! & -\, \dfrac{1-n}{n^3}\, \beta_1
- 2 n^2(1+n)\, \beta_2, 
\\[2.0ex]
\mu_2 =\!\!\! & \dfrac{2 (1+n)}{ (1-n)^2}\, \beta_2
+\dfrac{(1+n)(3n-1)}{n (1-n)^3}\, \beta_2^2, 
\end{array}
\end{equation} 
into \eqref{Eqn39}, we obtain the PSNF up to second-order terms:  
\begin{equation}\label{Eqn42}
\begin{array}{ll}
\dss\frac{{\rm d} y_1}{{\rm d} \tau} = &\!\!\! y_2, \\[2.0ex]
\dss\frac{{\rm d} y_2}{{\rm d} \tau} = &\!\!\!
\beta_1+\beta_2 y_2+y_1^2 - \dfrac{1-2n}{n^2}\, y_1 y_2, \qquad 
\textrm{for} \ \ 
n \in \Big(0,\, \dfrac{1}{2} \Big) \bigcup \Big(\dfrac{1}{2},\,1 \Big).  
\end{array}
\end{equation} 
Note from the above equation that the coefficient of 
$y_1 y_2$ is not normalized into $\pm 1$ in order to show the 
direct effect of the original system parameter $n$ on the dynamics of 
the system. It is clear that this coefficient can be positive or negative. 
Also, note that there is a negative multiplier $-\,\frac{1}{n^3(n+1)^3}$ 
in the transformation from $(u,v)$ to $(y_1,y_2)$. 

\vspace{0.05in} 
Based on the PSNF \eqref{Eqn42}, we have the following bifurcation result.

\begin{theorem}\label{Thm3.2}
For the system \eqref{Eqn11}, codimension-$2$ B-T bifurcation occurs
from the equilibrium ${\rm P_1}\!:$ $(x,y) \!=\!
(\frac{n}{(n+1)^2},\frac{n^2+n+1}{n(n+1)^2})$ 
when $\kappa \!=\! n(1-n)(n+1)^2$ and 
$ e \!=\! \frac{(n+1)^2}{1-n}$ if 
$n \! \in \! \big(0,\, \tfrac{1}{2} \big) \bigcup \big(\tfrac{1}{2},\,1 \big)$.
Moreover, three local bifurcations with the
representations of the bifurcation curves are given below.
\begin{enumerate}
\item[{\rm (1)}]
Saddle-node bifurcation occurs from the bifurcation curve:
$$
\hspace*{-1.28in} {\rm SN} = \left\{ (\beta_1,\beta_2) \mid \beta_1 =0,   
\left\{\!\! \begin{array}{ll}
\beta_2 <0 \ (0<n < \tfrac{1}{2}) \\[0.5ex]
\beta_2 >0 \ (\tfrac{1}{2}<n<1)
\end{array}
\right. \!\! 
\right\}.  
$$

\item[{\rm (2)}]
Hopf bifurcation occurs from the bifurcation curve:
$$
\hspace*{0.24in} {\rm H} =
\left\{ (\beta_1,\beta_2) \left| \beta_1
= -\, \frac{n^4}{(1\!-\!2n)^2} \, \beta_2^2, \right.  
\left\{\!\! \begin{array}{ll}
\beta_2 <0 \ (0<n < \tfrac{1}{2}), & \!\! {\rm supercritical}  \\[0.5ex]
\beta_2 >0 \ (\tfrac{1}{2}<n<1), & \!\! {\rm subcritical} 
\end{array}
\right. 
\!\!\! \right\}.  
$$

\item[{\rm (3)}]
Homoclinic loop bifurcation occurs from the bifurcation curve:
$$
\hspace*{0.25in} {\rm HL} = \left\{ (\beta_1,\beta_2) \left| \beta_1
=-\, \frac{49}{25} \frac{n^4}{(1\!-\!2n)^2}\, \beta_2^2, \right. 
\left\{\!\! \begin{array}{ll}
\beta_2 < 0 \ (0<n < \tfrac{1}{2}), & \!\! {\rm stable}  \\[0.5ex]
\beta_2 > 0 \ (\tfrac{1}{2}<n<1), & \!\! {\rm unstable} 
\end{array}
\right.
\!\!\! \right\}.  
$$
\end{enumerate}
\end{theorem}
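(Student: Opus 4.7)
My plan is to extract the three bifurcation curves directly from the PSNF \eqref{Eqn49}, which I rewrite as
\begin{equation*}
\dot y_1 = y_2, \qquad \dot y_2 = \beta_1 + \beta_2 y_2 + y_1^2 + b\, y_1 y_2, \qquad b := -\frac{1-2n}{n^2},
\end{equation*}
so that $\mathrm{sign}(b)=\mathrm{sign}(2n-1)$; this single sign is exactly the source of the two sub-cases appearing in each of SN, H, and HL. The equilibria are $y_2=0$ together with $y_1^\pm=\pm\sqrt{-\beta_1}$, existing only for $\beta_1\le 0$ and coalescing at $\beta_1=0$; verifying the usual non-degeneracy and transversality conditions gives the SN curve $\beta_1=0$ at once.

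For the Hopf curve I compute the Jacobian at $(y_1^\pm,0)$: $\det J=-2y_1^\pm$ and $\mathrm{tr}\,J=\beta_2+b\,y_1^\pm$. Only the left equilibrium $y_1^-=-\sqrt{-\beta_1}$ has positive determinant, and setting its trace to zero yields $\beta_2=b\sqrt{-\beta_1}$; squaring produces the claimed parabola $\beta_1 = -\beta_2^2/b^2 = -n^4\beta_2^2/(1-2n)^2$, together with $\mathrm{sign}(\beta_2)=\mathrm{sign}(b)$. Super- versus sub-criticality then follows from the first Lyapunov coefficient: after the translation $y_1=\eta_1-\beta_2/b$ the system matches the standard Bogdanov normal form with coefficient $b$ on the $\eta_1\eta_2$ term, whose first focus value is known to have the sign of $-b$ \cite{Kuznetsov1998}; thus the Hopf bifurcation is supercritical for $b<0$ (i.e.\ $n<\tfrac12$) and subcritical for $b>0$ (i.e.\ $n>\tfrac12$).

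For the homoclinic loop I apply the Bogdanov rescaling
\begin{equation*}
y_1=\epsilon^2 u_1, \quad y_2=\epsilon^3 u_2, \quad t=\epsilon^{-1}s, \quad \beta_1=-\epsilon^4, \quad \beta_2=\epsilon^2\mu,
\end{equation*}
turning the system into $u_1'=u_2,\ u_2'=-1+u_1^2+\epsilon(\mu+b u_1)u_2$. At $\epsilon=0$ this is the integrable system with Hamiltonian $H=\tfrac12 u_2^2+u_1-\tfrac13 u_1^3$, whose saddle at $(1,0)$ carries a homoclinic loop $\Gamma_0$ on which $u_2^2=\tfrac{2}{3}(u_1-1)^2(u_1+2)$ with $u_1\in[-2,1]$. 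The Melnikov function controlling persistence of $\Gamma_0$ is
\begin{equation*}
M(\mu)=\oint_{\Gamma_0}(\mu+b\,u_1)\,u_2\,du_1 = \mu\,I_0 + b\,I_1,
\end{equation*}
and the substitution $w=u_1+2$ reduces $I_0$ and $I_1$ to elementary polynomial integrals in $\sqrt{w}$, giving $I_0=24\sqrt{2}/5$ and $I_1=-24\sqrt{2}/7$. Solving $M(\mu)=0$ yields $\mu=5b/7$, so the perturbed loop lives along $\beta_2=(5b/7)\epsilon^2,\ \beta_1=-\epsilon^4$; eliminating $\epsilon$ produces the advertised curve $\beta_1=-\tfrac{49}{25}\,n^4\beta_2^2/(1-2n)^2$, once again with $\mathrm{sign}(\beta_2)=\mathrm{sign}(b)$. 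Loop stability is read off from the saddle divergence $\mathrm{tr}\,J(y_1^+,0)=\beta_2+b\sqrt{-\beta_1}$, which on the HL curve evaluates to $(12/7)b\epsilon^2$ and so has the sign of $b$: a stable loop for $b<0$ ($n<\tfrac12$) and an unstable loop for $b>0$ ($n>\tfrac12$).

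The main obstacle is the homoclinic step: the rescaling must be set up so that the leading-order system is genuinely Hamiltonian with a loop at the right energy, and the sign bookkeeping (orientation of $\Gamma_0$, direction of the perturbation, sign of the saddle quantity) must be tracked consistently with the Hopf-curve convention so that the uniform statement ``$\mathrm{sign}(\beta_2)=\mathrm{sign}(b)$ on all three curves'' survives. Once that is in place, the two loop integrals are routine.
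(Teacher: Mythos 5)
Your proposal is correct and follows essentially the route the paper intends: Theorem~\ref{Thm3.2} is stated as a direct consequence of the PSNF \eqref{Eqn49}, and the paper carries out exactly this equilibrium/trace analysis and blow-up--Melnikov computation (with the same homoclinic ratio $\tfrac{5}{7}$, i.e.\ the integrals $I_0=\tfrac{24\sqrt{2}}{5}$, $I_1=-\tfrac{24\sqrt{2}}{7}$) in its detailed treatment of the codimension-$3$ case in Section~3.3.2, so your SN, H and HL curves and the sign bookkeeping via $b=-\tfrac{1-2n}{n^2}$ all match. One small internal inconsistency: you assert that the first focus value ``has the sign of $-b$'' yet then conclude supercriticality for $b<0$; the correct statement is that the first Lyapunov coefficient has the sign of $+b$ (applying the standard cubic-coefficient formula to $\dot\xi=v$, $\dot v=-\omega_c^2\xi+\xi^2+b\,\xi v$ with $\omega_c^2=2\sqrt{-\beta_1}$ gives $16a=2b/\omega_c^2$), so your conclusion stands but the intermediate sign you cite is flipped.
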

The above formulas for bifurcation curves can be expressed in terms
of the original perturbation parameters 
$\mu_1$ and $\mu_2$ by using \eqref{Eqn40}.
The bifurcation diagram is depicted in Figure~\ref{fig4}.

\begin{figure}[!h]
\vspace*{-2.20in}
\begin{center}
\hspace*{-0.70in}
\begin{overpic}[width=1.19\textwidth,height=1.17\textheight]{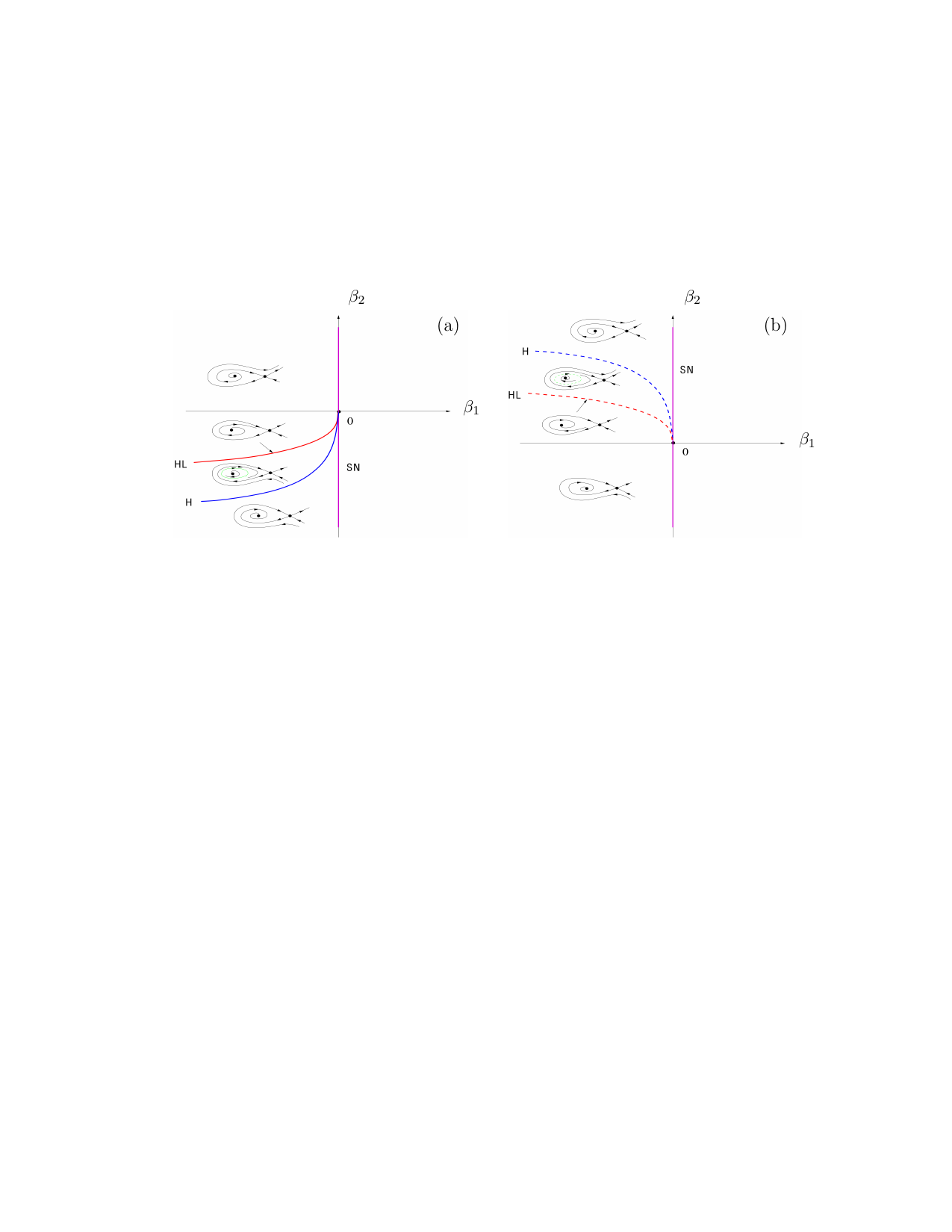}
\end{overpic}

\vspace*{-5.55in} 
\caption{Bifurcation diagrams for the codimension-2 B-T bifurcation
of the model \eqref{Eqn8} based on the normal form \eqref{Eqn42}: 
(a) for $0<n< \frac{1}{2}$; and (b) for $\frac{1}{2}<n<1$.}
\label{fig4}
\end{center} 
\vspace{-0.10in} 
\end{figure}

\begin{figure}[!t]
\vspace*{-2.25in}
\begin{center} 
\hspace*{-0.80in}
\begin{overpic}[width=1.19\textwidth,height=1.13\textheight]{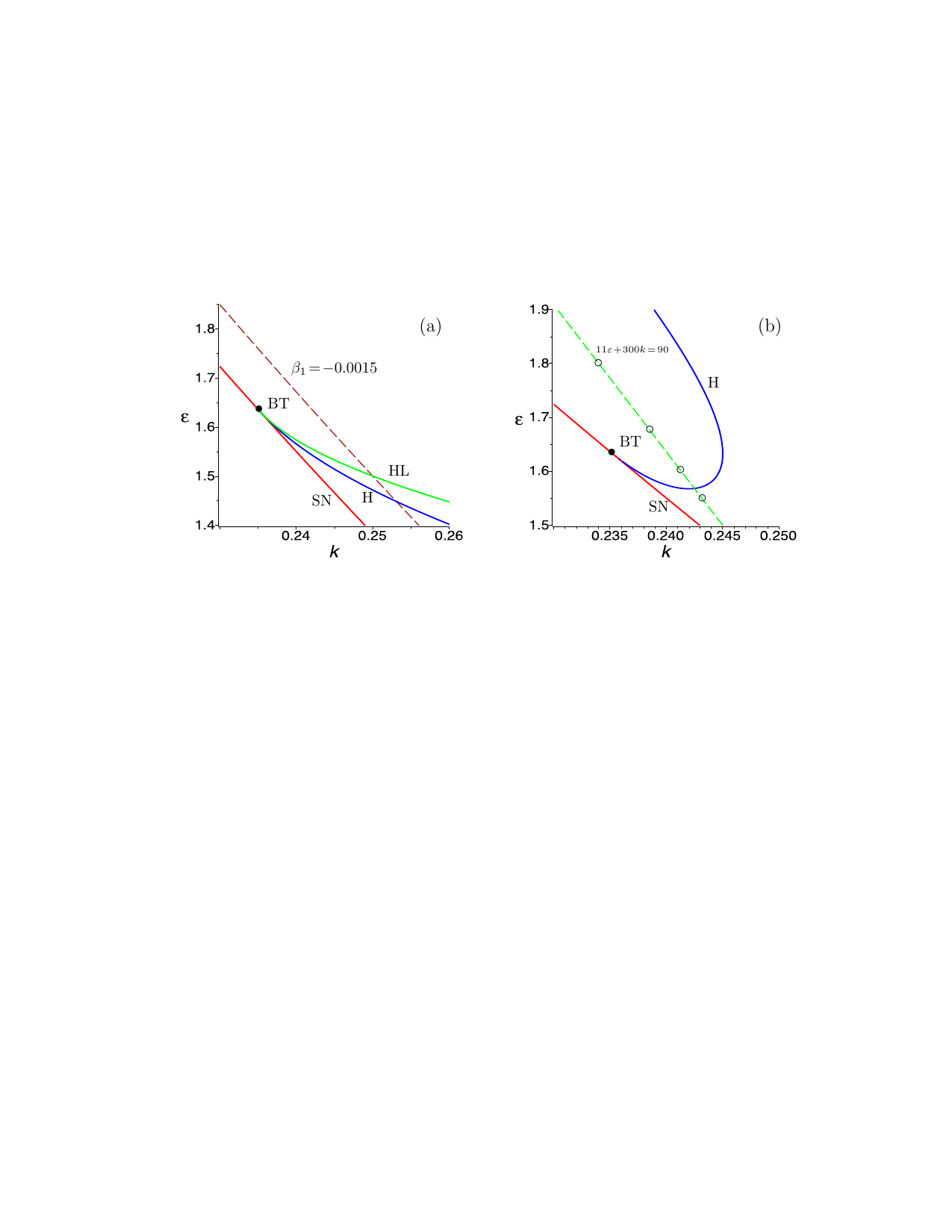}
\end{overpic}

\vspace{-5.25in}
\caption{Bifurcation diagrams for the codimension-$2$ B-T 
bifurcation of the model \eqref{Eqn8} with $m \!=\!2$ and 
$n \!=\! \frac{2}{5}$: (a) based on the PSNF \eqref{Eqn42}, 
where the straight line 
(in brown color) corresponds to the vertical line $\beta_1 \!=\! -0.0015$ 
in the bifurcation diagram in Figure~\ref{fig4}(a); and (b) 
based on the model \eqref{Eqn8}, with four points chosen from 
the line $ 11 \varepsilon \!+\! 300 k \!=\! 90$, with 
$\varepsilon \!=\! 1.55$, $1.60$, $1.676171875$ and $1.80$, respectively.}
\label{fig5} 
\end{center}

\vspace*{-0.30in}
\vspace*{-2.70in}
\begin{center} 
\hspace*{-0.83in}
\begin{overpic}[width=1.223\textwidth,height=1.276\textheight]{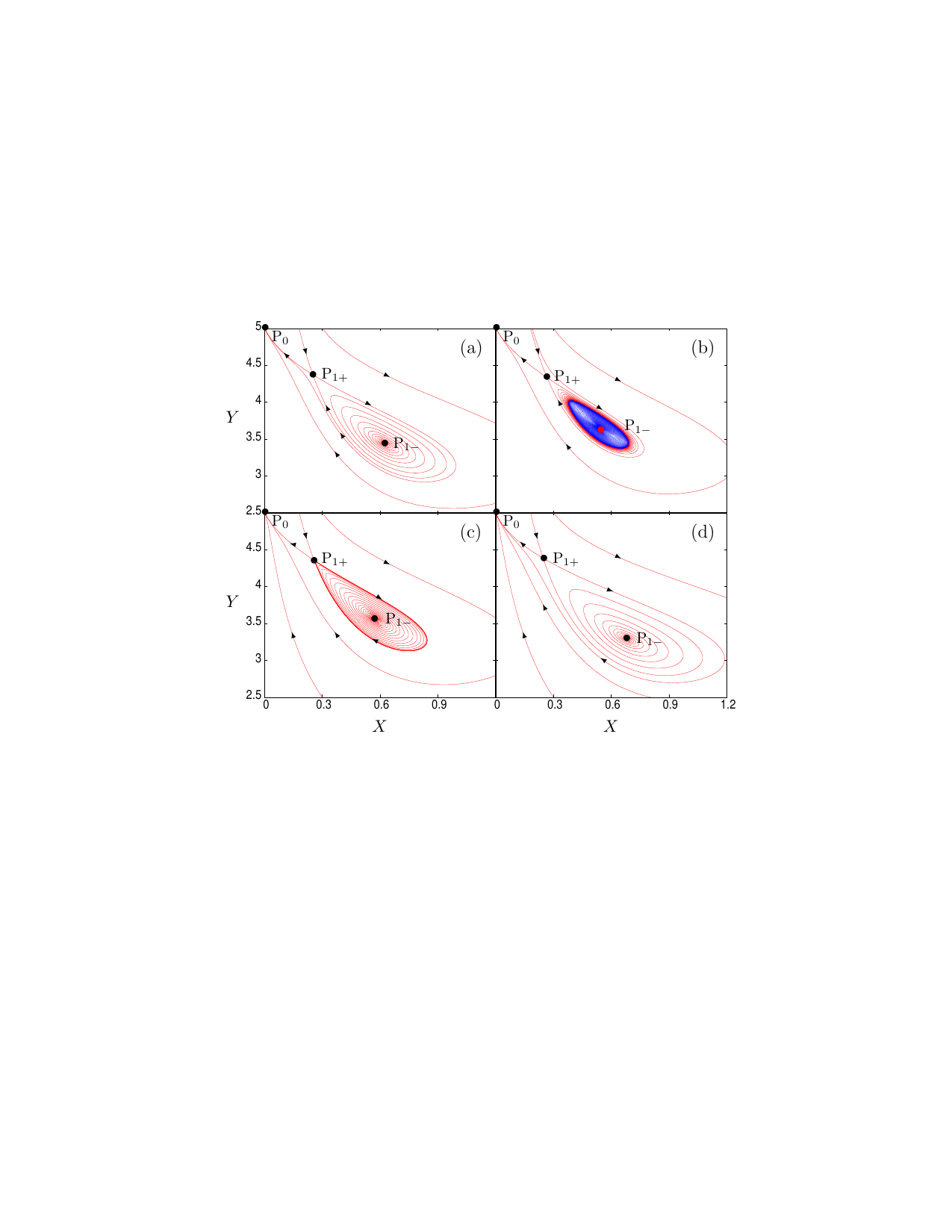}
\end{overpic}

\vspace{-4.40in}
\caption{Simulated phase portraits for the codimension-$2$ B-T 
bifurcation of the epidemic model \eqref{Eqn8} with $m \!=\!2$, 
$n \!=\! \frac{2}{5}$, and $k \!=\! \frac{90-11 \varepsilon}{300}$, for 
(a) $\varepsilon \!=\! 1.55$ showing the stable focus ${\rm P_{1-}}$, 
(b) $\varepsilon \!=\! 1.60$ showing Hopf bifurcation yielding 
a stalbe limit cycle and the unstable ${\rm P_{1-}}$, 
(c) $\varepsilon \!=\! 1.676171875$ showing the 
stable homoclinic loop and unstable ${\rm P_{1-}}$, and 
(d) $\varepsilon \!=\! 1.80$ showing the unstable focus ${\rm P_{1-}}$.} 
\label{fig6} 
\end{center}
\end{figure}

\begin{figure}[!t]
\vspace*{-2.20in}
\begin{center} 
\hspace*{-0.80in}
\begin{overpic}[width=1.17\textwidth,height=1.15\textheight]{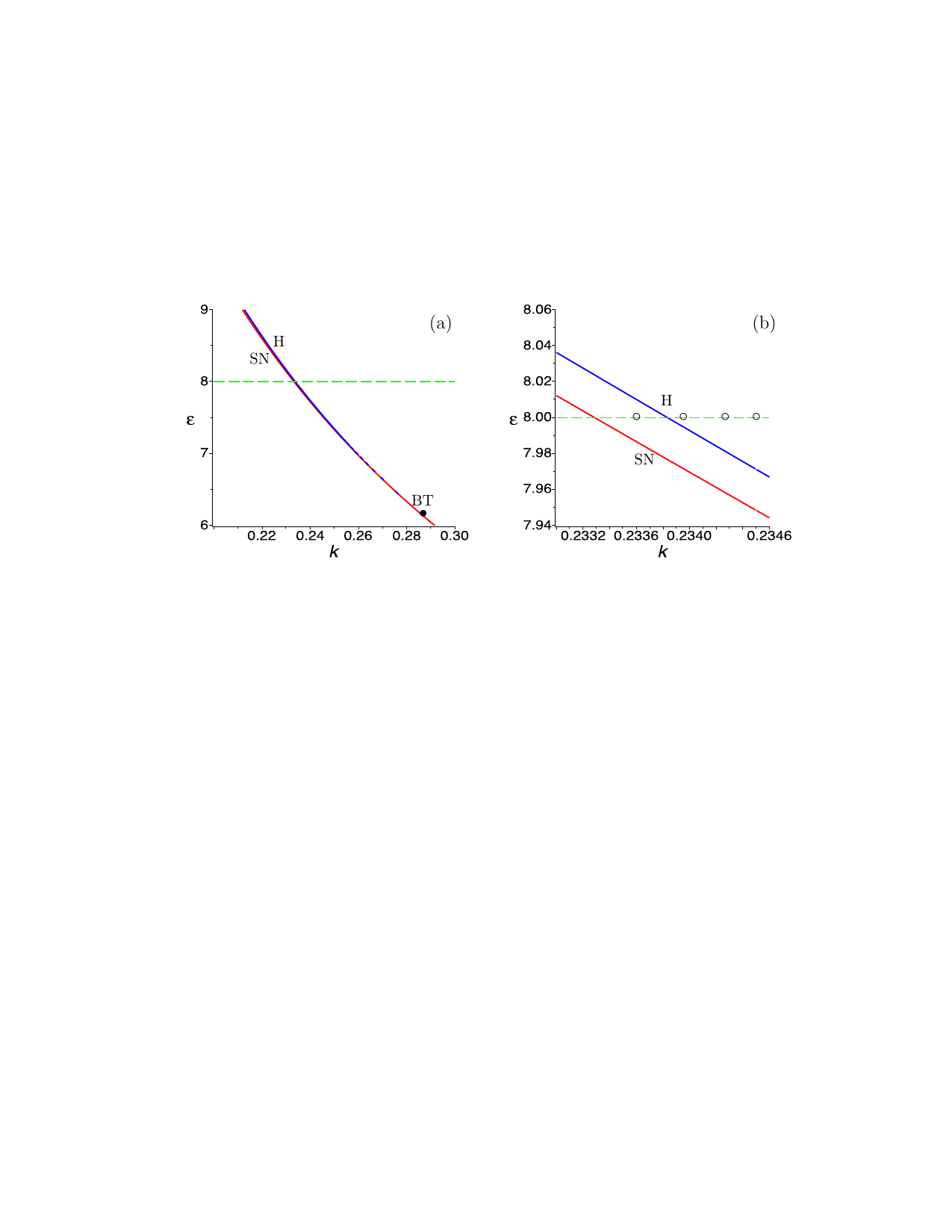}
\end{overpic}

\vspace{-5.35in} 
\caption{Bifurcation diagram for the codimension-$2$ B-T 
bifurcation of the epidemic model \eqref{Eqn8} with $m \!=\!2$, 
$n \!=\! \frac{3}{4}$: (a) a neighborhood of the B-T bifurcation 
point; and (b) the zoomed area along the line 
$\varepsilon \!=\! 8$, marked with four points by circles 
at $k \!=\! 0.2336$, $0.23395$, $0.23426542$ and $0.2345$.}
\label{fig7} 
\end{center} 

\vspace*{-0.30in}
\vspace*{-2.70in}
\begin{center} 
\hspace*{-0.83in}
\begin{overpic}[width=1.223\textwidth,height=1.276\textheight]{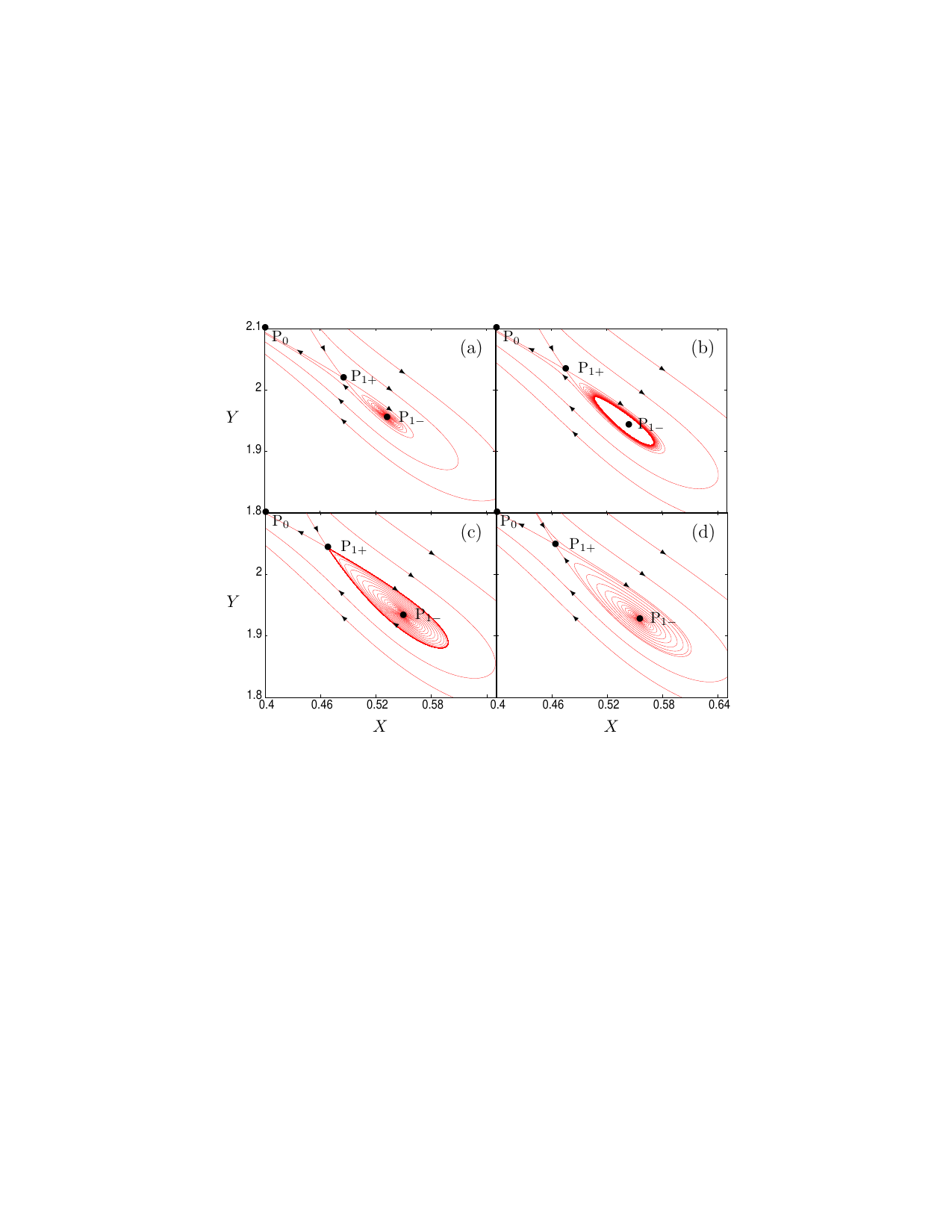}
\end{overpic}

\vspace{-4.40in}
\caption{Simulated phase portraits for the codimension-$2$ B-T 
bifurcation of the epidemic model \eqref{Eqn8} with $m \!=\!2$, 
$n \!=\! \frac{3}{4}$, and $\varepsilon \!=\! 8$, for 
(a) $k \!=\! 0.2336$ showing the unstable focus ${\rm P_{1-}}$, 
(b) $k \!=\! 0.23395$ showing Hopf bifurcation yielding 
an unstable limit cycle and the stable ${\rm P_{1-}}$, 
(c) $k \!=\! 0.23426542$ showing the 
unstable homoclinic loop and the stable ${\rm P_{1-}}$, and 
(d) $k \!=\! 0.2345$ showing the stable focus ${\rm P_{1-}}$.} 
\label{fig8} 
\end{center}
\end{figure}

In the following, we present simulations for the codimension-$2$ 
B-T bifurcations discussed above to illustrate 
the theoretical results. In order to give a direct impression of the 
original system's dynamical behaviours, we use the model \eqref{Eqn8}, 
rather than the normal forms \eqref{Eqn42} 
to perform the simulations. We choose $m\!=\! 2$, and two values of $n$: 
$ n \!=\! \frac{2}{5} \! \in \! (0, \frac{1}{2}) $, 
and $ n \!=\! \frac{3}{4} \! \in \! (\frac{1}{2},1) $. 
For $ n \!=\! \frac{2}{5}$, using the formulas given in 
\eqref{Eqn38}-\eqref{Eqn41}, we transform the bifurcation diagram 
in Figure~\ref{fig4}(a) back to that for the original model 
\eqref{Eqn8} near
the B-T critical point plotted in the $k$-$\varepsilon$ plane, as shown 
in Figure~\ref{fig5}(a). On the other hand, a bifurcation diagram 
directly based on the model \eqref{Eqn8} is given 
in Figure~\ref{fig5}(b). 
It is seen that the bifurcation diagram based on the 
normal form \eqref{Eqn42} (see Figure~\ref{fig4}(a)) 
gives a good indication for the dynamical 
behaviour of the original model near the B-T critical point. 
Moreover, the vertical line 
$\beta_1 \!=\! 0.01$ in the bifurcation diagram in Figure~\ref{fig4}(a) 
is mapped to a curve (almost a straight line) in Figure~\ref{fig5}(a). 
Therefore, we use the direct bifurcation diagram in Figure~\ref{fig5}(b) 
and choose four points on the straight line (in green color),  
$$ 
11 \varepsilon + 300 k = 90,  \quad \textrm{with} \quad 
\varepsilon = 1.55, \ 1.60, \ 1.676171875, \ 1.80. 
$$ 
The corresponding simulated phase portraits are shown in 
Figure~\ref{fig6}(a), (b), (c) and (d), respectively, 
representing the stable focus ${\rm P_{1-}}$, a stable limit cycle 
and the unstable ${\rm P_{1-}}$, the stable homoclinic loop, 
and the unstable ${\rm P_{1-}}$. These phase portraits exactly correspond 
to that in Figure~\ref{fig4}(a) in the upward direction.

Next, consider $n \!=\! \frac{3}{4}$. 
The bifurcation diagram is given in Figure~\ref{fig7}(a). 
The region between the 
saddle-node bifurcation curve and the Hopf bifurcation curve is 
quite narrow, and a zoomed region in shown in Figure~\ref{fig7}(b). 
Again, we choose four points from the line,
$$
\varepsilon = 8, \quad \textrm{with} \quad k=0.2336, \ 0.23395, \ 
0.23426542, \ 0.2345,
$$
and the corresponding 
simulated phase portraits are depicted in Figure~\ref{fig8}(a), (b), (c) 
and (d), respectively, representing the unstable focus ${\rm P_{1-}}$,
an unstable limit cycle and the stable ${\rm P_{1-}}$, the unstable 
homoclinic loop, and the stable ${\rm P_{1-}}$. These phase portraits 
exactly correspond to those in Figure~\ref{fig4}(b) in the downward direction.

\subsection{Codimension-$3$ B-T bifurcation}

In this section, we consider codimension-$3$ B-T bifurcation for 
the epidemic model \eqref{Eqn8}. First, we give a brief summary on the 
six-step transformation approach, and then introduce our one-step 
transformation method. 

\subsubsection{Summary of the six-step transformation method} 

In \cite{LiLiMa2015}, Li {\it et al.} applied the 
six-step transformation approach \cite{Dumortier1987} to 
provide a detailed analysis on the codimension-$3$ B-T bifurcation of 
the epidemic model \eqref{Eqn8}. In order to give a comparison, in the 
following, we first give a brief summary of the result 
(more details can be found in \cite{LiLiMa2015}). 
The basic idea of this approach is to employ a transformation in each 
step to remove one or two terms in the Taylor expansion of the vector 
field, which is not necessarily in algebraic formula and maybe in 
differential forms. 
Introducing the following transformation from the critical point,
$$
\quad k = \dfrac{9}{16m} + \mu_1, \quad 
\varepsilon = \dfrac{9}{2m} + \mu_2, \quad 
n=\dfrac{2m}{9} + \mu_3, \quad x = X- X_1^0, \quad y = Y - Y_1^0,   
$$
where $X_1^0=\frac{2m}{9}$, $Y_1^0=\frac{14m}{9}$, 
into \eqref{Eqn8} yields 
\begin{equation}\label{Eqn43}  
\frac{{\rm d} x}{{\rm d} \tau} 
= - \Big(\frac{1}{2} + \mu_3 \Big) x - y - \frac{14m}{9} 
\mu_3, \quad \frac{{\rm d} y}{{\rm d} \tau} = Q_1 (x,y), 
\end{equation}
where $Q_1$ is a Taylor expansion in $x,\,y$ and $\mu =(\mu_1,\mu_2,\mu_3)$. 
To simplify \eqref{Eqn43}, setting $u_1=x,\, v_1 = - (\frac{1}{2} + \mu_3 ) x 
- y - \frac{14m}{9} \mu_3$ yields 
$$
\frac{{\rm d}u_1}{{\rm d} \tau} = v_1, \quad \frac{{\rm d}v_1}{{\rm d} \tau} 
= Q_2 (u_1,v_1), 
$$
where 
$$ 
Q_2 (u_1,v_1) = \alpha_0 + \alpha_1 u_1 + \alpha_2 v_1 + \alpha_3 u_1 v_1 
+ a_1 u_1^2 + a_2 v_1^2 + a_3 u_1^3 + a_4 u_1^2 v_1 + a_5 u_1 v_1^2 
+ a_6 v_1^3, 
$$ 
in which $\alpha_j$'s and $a_k$'s are parameters, expressed in terms of 
the coefficients in $Q_1$. 

\vspace{0.05in} 
\noindent 
{\bf Step 1}. Use a nonlinear transformation $ u_1=u_2 + \frac{a_0}{2} u_2^2, 
\ v_1 = v_2 + a_2 u_2 v_2 $ to remove the $v_1^2$-term from $Q_2$, yielding 
$$
\frac{{\rm d} u_2}{{\rm d} \tau} = v_2, \quad 
\frac{{\rm d} v_2}{{\rm d} \tau} = Q_3 (u_2,v_2) 
+ R(u_2,v_2,\mu), 
$$ 
where the residue term $R$ has special property, and 
$$ 
Q_3 (u_2,v_2) = \beta_0 + \beta_1 u_2 + \beta_2 v_2 + \beta_3 u_2 v_2 
+ b_1 u_2^2 + b_2 u_2^3 + b_3 u_2^4 + b_4 u_2^2 v_2 + b_5 u_2 v_2^2 
+ b_6 v_2^3 +b_7 u_2^3 v_2, 
$$ 
where $\beta_j$'s and $b_k$'s are parameters, expressed in terms of 
the coefficients in $Q_2$. 

\vspace{0.05in} 
\noindent 
{\bf Step 2}. Use two nonlinear transformations, one in differential form 
$(u_2,v_2) \rightarrow (u_3,v_3)$ and one in algebraic form $(u_3,v_3) 
\rightarrow (u_4,v_4)$, to remove the $u_2 v_2^2$- and $v_2^3$-terms 
from $Q_3$, resulting in 
$$ 
\frac{{\rm d} u_4}{{\rm d} \tau} = v_4, \quad 
\frac{{\rm d} u_4}{{\rm d} \tau} = Q_5(u_4,v_4) + R(u_4,v_4,\mu). 
$$ 

\vspace{0.05in} 
\noindent 
{\bf Step 3}. Similarly, use a nonlinear transformation 
$(u_4,v_4) \rightarrow (u_5,v_5)$ to remove the $u_4^3$- and $u_4^4$-terms 
from $Q_5$, giving 
$$ 
\frac{{\rm d} u_5}{{\rm d} \tau} = v_5, \quad 
\frac{{\rm d} v_5}{{\rm d} \tau} = Q_6(u_5,v_5) + R(u_5,v_5,\mu). 
$$ 

\vspace{0.05in} 
\noindent 
{\bf Step 4}. Applying a nonlinear transformation  
$(u_5,v_5) \rightarrow (u_6,v_6)$ 
to remove the $u_5^2v_5$-term from $Q_6$ gives 
$$ 
\frac{{\rm d} u_6}{{\rm d} \tau} = v_6, \quad 
\frac{{\rm d} v_6}{{\rm d} \tau} = Q_7(u_6,v_6) + R(u_6,v_6,\mu), 
$$ 
where 
$$ 
Q_7(u_6,v_6) = \eta_0 + \eta_1 u_6 + \eta_2 v_6 + \eta_3 u_6 v_6  
+ \tilde{e}_1 u_6^2 + \tilde{e}_3 u_6^3 v_6. 
$$ 

\vspace{0.05in} 
\noindent 
{\bf Step 5}. Normalizing $\tilde{e}_1$ and $\tilde{e}_3$ 
to be $1$ in $Q_7$ yields 
$$ 
\frac{{\rm d} u_6}{{\rm d} \tau} = v_6, \quad 
\frac{{\rm d} v_6}{{\rm d} \tau} = Q_8(u_6,v_6) + R(u_6,v_6,\mu), 
$$ 
where 
$$ 
Q_8(u_6,v_6) = \sigma_0 + \sigma_1 u_6 + \sigma_2 v_6 + \sigma_3 u_6 v_6  
+ u_6^2 + u_6^3 v_6. 
$$ 

\vspace{0.05in} 
\noindent 
{\bf Step 6}. Use a nonlinear transformation 
$(u_6,v_6) \rightarrow (y_1,y_2)$ to 
remove the $u_6$-term from $Q_8$, finally yielding the normal form,  
\begin{equation}\label{Eqn44} 
\begin{array}{ll}  
\dfrac{{\rm d} y_1}{{\rm d} \tau} = y_2, \\[2.0ex] 
\dfrac{{\rm d} y_2}{{\rm d} \tau} = \varepsilon_1 + \varepsilon_2 y_2 
+ \varepsilon_3 y_1 y_2 + y_1^2 + y_1^3 y_2 + R(y_1,y_2,\mu). 
\end{array}
\end{equation}  
A careful checking on the transformations given in \cite{LiLiMa2015} we 
obtain 
$$ 
\tilde{e}_1 = \dfrac{27}{64 m}, \quad \tilde{e}_3 = -\, \dfrac{3645}{1024 m^3}. 
$$ 
Moreover, combining the transformations from 
$(\mu_1,\mu_2,\mu_3)$ to $(\varepsilon_1,\varepsilon_2,\varepsilon_3)$ yields 
\begin{equation}\label{Eqn45}  
\det \left[\frac{\partial (\mu_1,\mu_2,\mu_3)} 
     {\partial (\varepsilon_1,\varepsilon_2,\varepsilon_3)} \right]_{\mu=0} 
= -\, \frac{6}{m^2}\, \tilde{e}_1^{\frac{12}{5}} \tilde{e}_3^{-\frac{4}{5}} 
= -\, \frac{27}{64 m^2} \, \frac{(72)^{\frac{1}{5}}}{5^{\frac{4}{5}}} 
\ne 0, 
\end{equation}
implying that system \eqref{Eqn44} with 
$(\varepsilon_1,\varepsilon_2,\varepsilon_3) \sim (0,0,0)$ for 
$(y_1,y_2)$ near $(0,0)$ is equivalent to system \eqref{Eqn8} with 
$(k,\varepsilon,n) \sim (k_0,\varepsilon_0,n_0)$ for 
$(X,Y)$ near $(X_1^0,Y_1^0)$. 
 
It should be pointed out that although the computation demand in each step 
of the above process is not heavy, finding the complete transformation 
between the system \eqref{Eqn8} and the final normal 
form \eqref{Eqn44} is not an easy task. 
Also note that in each of the above transformations, only the 
dominant terms $Q_k$ are take into account, while 
the $R$'s with special property are ignored, which reduces 
computation demanding.

\subsubsection{One-step transformation method} 

Now, we turn to consider our one-step transformation approach, 
which is based on the parametric simplest normal form (PSNF)
\cite{YL2003,GM2015,YuZhang2019}. The main difficulty of this method is 
how to determine the basis for nonlinear transformations, since different 
systems require different forms of transformations. 

Introducing the transformation,
$$ 
x = x_1^0 + u, \quad y= y_1^0 + v, \quad 
\kappa = \kappa_0 + \mu_1, \quad e = e_0 + \mu_2, 
\quad n = n_0 + \mu_3, 
$$
together with \eqref{Eqn37} into \eqref{Eqn11}    
yields the following system up to 4th order,
\begin{equation}\label{Eqn46} 
\begin{array}{rl}
\dfrac{{\rm d} u}{{\rm d} \tau}= \!\!\! & v, \\[1.0ex]
\dfrac{{\rm d} v}{{\rm d} \tau}= \!\!\! & \dfrac{16}{27} \, \mu_1
+\dfrac{1}{27} \, \mu_2
+ \!\!\! \dss\sum_{i+j+k+l+s=2}^4 \!\!\!\! p_{ijkls}\ 
u^i v^j \mu_1^k \mu_2^l \mu_3^s + \textrm{h.o.t.}, 
\end{array}
\end{equation} 
where the coefficients $p_{ijkls}$ are real values. 
Then, applying the change of variables, 
$$
\begin{array}{rl} 
u =\!\!\!& - \tfrac{4}{9} 48^{\frac{1}{5}}  y_1 
- \tfrac{1}{6} 48^{\frac{2}{5}} \beta_1 
+ \tfrac{1}{27} 48^{\frac{3}{5}} \beta_2  
+ \tfrac{1}{18} 48^{\frac{2}{5}} y_1^2 
+ \tfrac{1}{9} 48^{\frac{2}{5}} y_1 y_2  
+ \tfrac{1}{240} 48^{\frac{3}{5}} y_2^2 
\\[1.5ex] 
& \quad +\, \big( \tfrac{763}{2160} 48^{\frac{3}{5}} \beta_1 
           - \tfrac{8}{81} 48^{\frac{4}{5}} \beta_2 
           + \tfrac{16}{243} 48^{\frac{3}{5}} \beta_3  \big)\, y_1  
\\[1.5ex] 
& \quad +\, \big( \tfrac{2386838}{93555} 48^{\frac{1}{5}} \beta_1 
           - \tfrac{24223}{93555} 48^{\frac{2}{5}} \beta_2 
           + \tfrac{8306}{5103} 48^{\frac{1}{5}} \beta_3  \big) \, y_2 
+ \tfrac{103439}{259200} 48^{\frac{4}{5}} \beta_1^2  
+ \tfrac{22}{81} 48^{\frac{1}{5}} \beta_2^2  
\\[0.5ex] 
& \quad -\, \tfrac{32}{9} \beta_1 \beta_2  
+ \tfrac{5}{162} 48^{\frac{4}{5}} \beta_1 \beta_3  
- \tfrac{32}{243} \beta_2 \beta_3  
+ \!\!\!\!\! \dss\sum_{i+j+k+l+s=3}^4 \!\!\!\!\! a_{ijkls}\ y_1^i y_2^j
\beta_1^k \beta_2^l \beta_3^s ,  
\\[2.0ex] 
v =\!\!\!& - \tfrac{ 48^{\frac{4}{5}} }{36} y_2 
+ \tfrac{7}{54} 48^{\frac{3}{5}} y_2^2 
+ \tfrac{7}{54} 48^{\frac{3}{5}} \beta_1 \, y_1 
+ \big( \tfrac{149}{144} 48^{\frac{1}{5}} \beta_1 
           - \tfrac{2}{9} 48^{\frac{2}{5}} \beta_2 
           + \tfrac{2}{27} 48^{\frac{1}{5}} \beta_3  \big) \, y_2 
\\[0.5ex] 
& \quad 
+\, \tfrac{1193419}{748440} 48^{\frac{4}{5}} \beta_1^2  
+ \tfrac{24233}{31185} \beta_1 \beta_2  
+ \tfrac{4153}{40824} 48^{\frac{4}{5}} \beta_1 \beta_3  
+ \!\!\!\!\! \dss\sum_{i+j+k+l+s=3}^4 \!\!\!\!\! b_{ijkls}\ y_1^i y_2^j
\beta_1^k \beta_2^l \beta_3^s ,  
\end{array} 
$$
where the coefficients $b_{ijkls}$ are real values, 
the parametrization, 
$$
\begin{array}{rl} 
\mu_1 = \!\!\! & \tfrac{9}{64} 48^{\frac{2}{5}} \beta_1 
+ 3 108^{\frac{1}{5}} \beta_2 
+ \tfrac{15}{128} 48^{\frac{4}{5}} \beta_1^2  
+ \tfrac{41}{64} 48^{\frac{1}{5}} \beta_2^2  
- \tfrac{59}{16} \beta_1 \beta_2 
\\[0.5ex] 
& \quad +\, \tfrac{11}{384} 48^{\frac{4}{5}} \beta_1 \beta_3  
- \tfrac{31}{48} \beta_2 \beta_3  
+ \!\!\!\!\! \dss\sum_{i+j+k=3}^4 \!\!\!\!\! \alpha_{ijk}\ \beta_1^i 
\beta_2^j \beta_3^k ,  
\\[1.5ex] 
\mu_2 = \!\!\! & - 9 72^{\tfrac{1}{5}}\beta_1
       +3 108^{\tfrac{1}{5}} \beta_2
       -3 162^{\tfrac{1}{5}}\beta_1^2
       -8 48^{\tfrac{1}{5}}\beta_2^2
       +38\beta_1\beta_2 
\\[1.0ex] 
& \quad -\, \tfrac{13}{3} 162^{\tfrac{1}{5}} \beta_1\beta_3
       +\tfrac{31}{3} \beta_2\beta_3
       +\tfrac{34}{3} 162^{\tfrac{1}{5}} \beta_2^3
       -\tfrac{115}{9} 108^{\tfrac{1}{5}} \beta_2^2\beta_3
       +\tfrac{25}{6} 72^{\tfrac{1}{5}} \beta_2\beta_3^2 , 
\\[1.0ex] 
\mu_3 = \!\!\! & \tfrac{5}{12} 72^{\tfrac{1}{5}} \beta_1
       -\tfrac{1}{4} 108^{\tfrac{1}{5}} \beta_2
       -\tfrac{1645571}{51840} 162^{\tfrac{1}{5}} \beta_1^2
       +\tfrac{1}{6} 48^{\tfrac{1}{5}} \beta_2^2
       +\tfrac{2}{81} 162^{\tfrac{1}{5}} \beta_3^2
\\[1.0ex] 
&-\, \tfrac{9119}{1800} \beta_1 \beta_2
       -\tfrac{31}{810} 162^{\tfrac{1}{5}} \beta_1 \beta_3
       -\tfrac{1}{3} \beta_2 \beta_3,
\end{array} 
$$
and the time rescaling, 
$$
{\rm d} \tau = \Big( \tfrac{1}{4} 108^{\tfrac{1}{5}} 
       -\tfrac{1}{4} 162^{\tfrac{1}{5}} y_1
       -\tfrac{1}{6} 48^{\tfrac{1}{5}} \beta_2
       +\tfrac{5}{18} \beta_3 
\Big) \, {\rm d} \tau_1,  
$$
into \eqref{Eqn46} yields the following PSNF up to $4$th-order terms,
\begin{equation}\label{Eqn47}
\begin{array}{rl}
\displaystyle\frac{{\rm d} y_1}{{\rm d} \tau_1} = &\!\!\! y_2, \\[1.5ex]
\displaystyle\frac{{\rm d} y_2}{{\rm d} \tau_1} = &\!\!\! \beta_1
+ \beta_2 \, y_2 + \beta_3 \, y_1 y_2 + y_1^2 + y_1^3 y_2
+ \mathcal{O}(|(y_1,y_2,\beta)|^5).
\end{array}
\end{equation}

It is easy to verify that
\begin{equation}\label{Eqn48}
\det \! \left[ \frac{\partial (\mu_1, \mu_2, \mu_3)}
{\partial (\beta_1, \beta_2, \beta_3)} \right]_{\beta=0}
= -\, \dfrac{27}{64}\, 72^{\frac{1}{5}} \ne 0, 
\end{equation}
which shows that near the critical point $\mu=0$, 
system \eqref{Eqn8} has the same bifurcation set with respect to
$\mu $ as system \eqref{Eqn47} has that with respect to $\beta$,
up to a homeomorphism in the parameter space.

Comparing our one-step transformation method with the classical 
six-step transformation approach, we have the following observations. 
\begin{enumerate}
\item[{(i)}]  
The one-step approach depends upon purely algebraic computation; while
the six-step approach involves different types of transformations.

\item[{(ii)}]  
The one-step approach yields a direct relation between the SNF (or PSNF)
and the original system, which makes it convenient in applications; while for
the six-step approach, finding a direct relation needs to put all the
transformation together, which involves a lot of computations.

\item[{(iii)}]  
The one-step approach is easier to be used for developing a general
algorithm for the symbolic computation.

\item[{(iv)}] 
The one-step approach provides a complete nonlinear transformation 
up to a given order, while the six-step approach only take the 
dominant parts in each step of transformation.  

\item[{(v)}]  
The six-step approach has less computation in each step; while the
computation demands for the one-step is higher, in particular for
higher-codimension bifurcations.
\end{enumerate}  

Now, following the method described in \cite{Dumortier1987}, and the 
computations in \cite{YuZhang2019}, we apply 
the method of normal forms and Abelian integral 
(or the Melnikov function method) to derive the bifurcations 
for the codimension-$3$ B-T bifurcation. 
In \cite{YuZhang2019}, the term $x_1^3 x_2$ (same as 
$y_1^3 y_2$ in \eqref{Eqn47}) in the normal form 
has a coefficient $-\,b_1$. So,  
theoretically speaking, we can use the formulas in \cite{YuZhang2019} and 
set $ b_1 \!=\! -1$ to directly obtain the bifurcation results for 
our system \eqref{Eqn47}. However, 
it has been noted that there are some errors in the formulas 
given in \cite{YuZhang2019} for the codimension-3 B-T bifurcation, 
as listed below. 
\begin{enumerate}
\item[{\rm (a)}]
In Eqn. (91), $\xi_3 + 3 b_1 \xi_1$ should be 
$\xi_3 - 3 b_1 \xi_1$. 

\item[{\rm (b)}]
In Eqn. (93), $\frac{103}{55}$ 
should be $\frac{179}{11}$. 

\item[{\rm (c)}]
In Eqn. (107), $z_1(t) = - 3 \,{\rm sech}^2(t)$ 
and $ z_2(t) = 3 \, {\rm sech}^2(t) \, {\rm tanh} (t)$ should be 
$z_1(t) = - 3 \,\bar{\nu}_1 \, {\rm sech}^2(t)$ 
and $ z_2(t) = 3 \, \bar{\nu}_1\, {\rm sech}^2(t) \, 
 \, {\rm tanh} (t)$, respectively, and thus $-\frac{103}{77}$ 
should be $\frac{895}{77}$. 

\item[{\rm (d)}]
In Eqn. (109), 
$\frac{103}{55}$ should be $\frac{179}{11}$. 
\end{enumerate}
Then, other changes in Eqns. (112)-(115) are followed accordingly. 
However, note that the bifurcation results shown in Figure 19 
of \cite{YuZhang2019} are qualitatively not changed.  

In order to correct the errors in \cite{YuZhang2019} and provide 
readability for readers,
in the following we briefly describe the derivations. 
First of all, it is easy to see that system \eqref{Eqn47} has two 
equilibrium solutions ${\rm E_{\pm}}$, 
\begin{equation}\label{Eqn49}
{\rm \widetilde{E}_\pm}= (y_{1 \pm},0), \quad {\rm where} \quad
y_{1 \pm} = \pm \sqrt{-\beta_1} \quad {\rm for} \quad \beta_1 < 0.
\end{equation} 
The Jacobian of \eqref{Eqn47} evaluated at ${\rm \widetilde{E}_\pm}$ is given by
$$
J_\pm = \left[ \begin{array}{ll} 0 & 1 \\
2 y_{1 \pm} & \beta_2 + \beta_3 y_{1 \pm}
+ y_{1 \pm}^3 \end{array}
\right],
$$
which indicates that 
$ {\rm \widetilde{E}_{+}}$ is a saddle, and ${\rm \widetilde{E}_{-}} $ 
is either a focus or node.
It is easy to see from the Jacobian that the plane 
\begin{equation}\label{Eqn50}
{\rm SN} = \big\{ (\beta_1, \beta_2, \beta_3) \mid \beta_1 = 0 \big\},
\end{equation}
excluding the origin in the
parameter space is the saddle-node bifurcation surface. Hopf bifurcation 
occurs from ${\rm \widetilde{E}_-}$ on the critical surface, defined by 
that the trace equals zero, i.e.,  
$$
\beta_2 - (\beta_3 - \beta_1) \sqrt{-\beta_1} = 0, \quad 
(\beta_1<0). 
$$
Based on Hopf bifurcation theory, a direct computation 
(e.g., with the Maple program in \cite{Yu1998}) yields the 
following focus values, 
$$
v_1 = \dfrac{\beta_3 + 3 \beta_1}{16 \sqrt{-\beta_1}} \quad 
\textrm{and} \quad \left. v_2 \right|_{v_1 = 0} 
= \dfrac{5}{96  \sqrt{-\beta_1}} > 0,
$$ 
which implies that generalized Hopf bifurcation occurs on the surface, 
defined by $v_1=0$, 
\begin{equation}\label{Eqn51}
\beta_3 + 3 \beta_1 = 0, \quad (\beta_1<0), 
\end{equation}
leading to two limit cycles, with 
the outer one unstable and the inner one stable, and 
both of them enclose the unstable focus ${\rm \widetilde{E}_{-}}$.  

Next, to find the homoclinic and the degenerate 
homoclinic bifurcations, we apply the Melnikov function method 
\cite{HanYu2012}. Introducing the scaling,
$$
y_1 = \varepsilon^{\frac{2}{5}} w_1, \ \
y_2 = \varepsilon^{\frac{3}{5}} w_2, \ \
\beta_1 = \varepsilon^{\frac{4}{5}} \varphi_1, \ \
\beta_2 = \varepsilon^{\frac{6}{5}} \varphi_2, \ \
\beta_3 = \varepsilon^{\frac{4}{5}} \varphi_3, \ \
\tau_2 = \varepsilon^{\frac{1}{5}} \tau_1, \ \
(0< \varepsilon \ll 1),
$$
together with the following transformation, 
$$
w_1 = \bar{\varphi}_1 + \tilde{x}_1, \quad
w_2 = \sqrt{2 \bar{\varphi}_1} \, \tilde{x}_2, \quad
\tau_3 = \sqrt{2 \bar{\varphi}_1} \, \tau_2, \quad
\varphi_1 = - \bar{\varphi}_1^2, \ \ (\bar{\varphi}_1>0),
$$
into \eqref{Eqn47} we obtain
\begin{equation}\label{Eqn52}
\begin{array}{rl} 
\dss\frac{{\rm d} \tilde{x}_1}{{\rm d} \tau_3} = \!\!\! & \tilde{x}_2,\\[2.0ex]
\dss\frac{{\rm d} \tilde{x}_2}{{\rm d} \tau_3} = \!\!\! &
\tilde{x}_1 + \dfrac{1}{2 \bar{\varphi}_1}\, \tilde{x}_1^2 +
\varepsilon \, q(\tilde{x}_1,\tilde{x}_2, \bar{\varphi}),
\end{array}
\end{equation}
where 
$$
q(\tilde{x}_1,\tilde{x}_2, \bar{\varphi}) 
= \dfrac{1}{\sqrt{2 \bar{\varphi}_1}} \,
\big[ (\varphi_2 + \bar{\varphi}_1 \, \varphi_3 + \bar{\varphi}_1^3 ) 
\tilde{x}_2 + ( \varphi_3 + 3 \bar{\varphi}_1^2 )  \tilde{x}_1 \tilde{x}_2
+ 3 \bar{\varphi}_1 \tilde{x}_1^2 \tilde{x}_2 
+ \tilde{x}_1^3 \tilde{x}_2 \big], 
$$
with $\bar{\varphi}=(\bar{\varphi}_1,\varphi_2,\varphi_3)$.

The system $\left. \eqref{Eqn52}\right|_{\varepsilon=0}$ is a Hamiltonian
system with two equilibrium solutions,
$$
{\rm \bar{E}_-} = ( -2 \bar{\varphi}_1,0) \quad {\rm and} \quad
{\rm \bar{E}_0} = (0, 0),
$$
with ${\rm \bar{E}_- }$ and ${\rm \bar{E}_0}$
being center and saddle, respectively.
These two equilibria correspond to the ${\rm \widetilde{E}_\pm}$ defined 
in \eqref{Eqn49}. The Hamiltonian is given by 
$$
H(\tilde{x}_1,\tilde{x}_2) = \dfrac{1}{2}\, (\tilde{x}_2^2 - \tilde{x}_1^2)
- \dfrac{1}{6 \bar{\varphi}_1} \, \tilde{x}_1^3,
$$
and the homoclinic orbit connecting ${\rm E_0}$ is described by
$$
\Gamma_0: \quad H (\tilde{x}_1,\tilde{x}_2)
= \dfrac{1}{2}\, (\tilde{x}_2^2 - \tilde{x}_1^2)
- \dfrac{1}{6 \bar{\varphi}_1} \, \tilde{x}_1^3,
\quad  \textrm{with} \ \ H(0,0) = 0,
$$
and $H(-2 \bar{\varphi}_1,0) = - \frac{2}{3} \, \bar{\varphi}_1^2$.
Thus, any closed orbits of the Hamiltonian system
$\left. \eqref{Eqn52}\right|_{\varepsilon=0}$ inside
$\Gamma_0$ can be described by
$$
\Gamma_{\rm h}: \quad H(\tilde{x}_1,\tilde{x}_2,h) =
 \dfrac{1}{2}\, (\tilde{x}_2^2 - \tilde{x}_1^2)
- \dfrac{1}{6 \bar{\varphi}_1} \, \tilde{x}_1^3 - h = 0, \quad
h \in \Big(\!\! -\! \frac{2}{3} \, \bar{\varphi}_1^2, 0 \Big).
$$
Now, the Abelian integral or the (first-order) Melnikov function
for the perturbed system \eqref{Eqn52} can be written as \cite{HanYu2012} 
$$
\begin{array}{rl} 
M(h,\varphi) \!\!\! & = \dss\oint_{\Gamma_{\rm h}} 
\big[q (\tilde{x}_1,\tilde{x}_2,\varphi)
\, {\rm d} \tilde{x}_1 - p(\tilde{x}_1,\tilde{x}_2,\varphi) \, {\rm d} 
\tilde{x}_2 \big]_{\varepsilon=0}  \qquad (p=0) \\[2.5ex]
& = \dss\oint_{\Gamma_{\rm h}}  q (\tilde{x}_1,\tilde{x}_2,\varphi) 
\! \mid_{\varepsilon=0} \, {\rm d} \tilde{x}_1 
= \dss\oint_{\Gamma_{\rm h}} H_{\tilde{x}_2}
 q (\tilde{x}_1,\tilde{x}_2,\varphi) \! \mid_{\varepsilon=0} \, 
{\rm d} \tau_3 \\[2.5ex]
& = \dfrac{1}{\sqrt{2 \bar{\varphi}_1}}  \dss\oint_{\Gamma_{\rm h}}
\tilde{x}_2^2\, \big[ \varphi_2 + \bar{\varphi}_1 \, \varphi_3 
+ \bar{\varphi}_1^3  + ( \varphi_3 + 3 \bar{\varphi}_1^2 ) \tilde{x}_1
+ 3 \bar{\varphi}_1 \tilde{x}_1^2 + \tilde{x}_1^3  \big] \, {\rm d} \tau_3 
\\[2.5ex] 
& = C_0(\varphi) + C_1(\varphi)\, h \ln |h|
+ C_2(\varphi) \, h + C_3(h) \, h^2 \ln |h| + \cdots, 
\end{array} 
$$
for $0< -h \ll 1$, where 
$$
\begin{array}{ll}
C_0(\varphi) = \dss\frac{1}{\sqrt{2 \bar{\varphi}_1}}  \dss\oint_{\Gamma_0}
\tilde{x}_2^2 \big[ \varphi_2 + \bar{\varphi}_1 \, \varphi_3 
+ \bar{\varphi}_1^3 + ( \varphi_3 + 3 \bar{\varphi}_1^2 )  \tilde{x}_1
+ 3 \bar{\varphi}_1 \tilde{x}_1^2 + \tilde{x}_1^3 \big] \, {\rm d} \tau_3, 
\\[2.0ex]
C_1(\varphi) = a_{10} + b_{01},
\end{array}
$$
in which $a_{10} $ and $b_{01}$ are the coefficients in the functions
$p(\tilde{x}_1,\tilde{x}_2,\varphi)$ and 
$q(\tilde{x}_1,\tilde{x}_2,\varphi)$, given by
$$
a_{10} =0, \quad
b_{01} = \dss\frac{1}{\sqrt{2 \bar{\varphi}_1}}
(\varphi_2 + \bar{\varphi}_1 \, \varphi_3 + \bar{\varphi}_1^3 ).
$$
To compute $C_0(\varphi)$, introducing the parametric transformation,
$$ 
\tilde{x}_1(\tau_3) = -3 \,\bar{\varphi}_1 \, {\rm sech}^2(\tau_3), \quad
\tilde{x}_2(\tau_3) = 3 \, \bar{\varphi}_1\, {\rm sech}^2(\tau_3) \, 
{\rm tanh} (\tau_3),
$$
into $C_0(\varphi)$ with a direct integration we obtain
$$
C_0(\varphi) = \frac{6 \bar{\varphi}_1 \sqrt{2 \bar{\varphi}_1}}{5}
\Big( \varphi_2 - \frac{5}{7} \bar{\varphi}_1 \, \varphi_3 
- \frac{895}{77} \, b_1 \bar{\varphi}_1^3 \Big).  
$$
Finally, we express $C_0(\varphi)$ and $C_1(\varphi)$
in terms of the original perturbation parameters $\beta_j$ by using
$$
\bar{\varphi}_1 = \sqrt{-\varphi_1} 
= \sqrt{- \varepsilon^{-\frac{4}{5}} \, \beta_1}
=  \varepsilon^{-\frac{2}{5}}  \sqrt{-\beta_1}, \quad
\varphi_2 =  \varepsilon^{-\frac{6}{5}} \beta_2, \quad
\varphi_3 =  \varepsilon^{-\frac{4}{5}} \beta_3,
$$
as
\begin{equation}\label{Eqn53}
\begin{array}{ll} 
C_0(\beta) = \dfrac{6 \bar{\varphi}_1 \sqrt{2 \bar{\varphi}_1}}{5}
\, \varepsilon^{-\frac{6}{5}}
\Big[ \beta_2 - \dfrac{5}{7} \Big( \beta_3 - \dfrac{179}{11}\,\beta_1 \Big)
\sqrt{- \beta_1} \Big], \\[1.5ex]
C_1(\beta) = \dfrac{1}{ \sqrt{2 \bar{\varphi}_1}}
\, \varepsilon^{-\frac{6}{5}}
\left[ \beta_2 + ( \beta_3 - \beta_1 ) \sqrt{- \beta_1} \, \right].
\end{array}
\end{equation}
Hence, the homoclinic and degenerate homoclinic bifurcation surfaces 
are defined by $ C_0(\beta) \!=\! 0$ and $C_1(\beta)=0$, respectively. 

Summarizing the above results we obtain the following theorem. 
Note that a summary of the result was given 
in \cite{LiLiMa2015}. Here, more detailed formulas are provided.

\begin{theorem}\label{Thm3.3}
For the epideminc model \eqref{Eqn11}, codimension-$3$ B-T bifurcation occurs
from the equilibrium ${\rm P_1}$: $(x,y) \!=\!
(\frac{2}{9},\frac{14}{9})$ when $ \kappa \!=\! \frac{9}{16}$, 
$e \!=\! \frac{9}{2}$ and $n \! =\! \frac{1}{2}$.
Moreover, six local bifurcations with the
representations of the bifurcation surfaces/curves are obtained, 
as given below.
\begin{enumerate}
\item[{\rm (1)}]
Saddle-node bifurcation occurs from the critical surface: 
$$
\hspace*{-2.65in} 
{\rm SN} = \big\{ (\beta_1,\beta_2,\beta_3) \mid \beta_1=0 \big\}.
$$

\item[{\rm (2)}]
Hopf bifurcation occurs from the critical surface: 
$$
\hspace*{-1.24in} {\rm H} = \big\{ (\beta_1,\beta_2,\beta_3) \mid
\beta_1<0, \ \beta_2 =
\big( \beta_3 - \beta_1 \big) \sqrt{- \beta_1 } \big\}.
$$

\item[{\rm (3)}]
Homoclinic loop bifurcation occurs from the critical surface: 
$$
\hspace*{-0.87in} {\rm HL} =
\left\{ (\beta_1,\beta_2,\beta_3) \mid \beta_1<0, \
\beta_2 = \tss\frac{5}{7} \big( \beta_3
- \tss\frac{179}{11} \beta_1 \big) \sqrt{-\beta_1} \right \}.
$$

\item[{\rm (4)}]
Generalized Hopf bifurcation occurs from the critical curve: 
$$
\hspace*{-0.60in} {\rm GH} = 
\left\{ (\beta_1,\beta_2,\beta_3) \mid \beta_1<0, \
\beta_2 = -4 \beta_1 \sqrt{-\beta_1}, \ 
\beta_3 = -3 \beta_1 \right \}.
$$

\item[{\rm (5)}]
Degenerate homoclinic bifurcation occurs from the critical curve: 
$$
\hspace*{-0.45in} {\rm DHL} = \left\{ (\beta_1,\beta_2, \beta_3) 
\left| \, \beta_1<0, \ 
\beta_2 = -\tss\frac{70}{11} \, \beta_1 \sqrt{-\beta_1} , \
\beta_3 = \tss\frac{81}{11} \, \beta_1 
\right.
\right\}. 
$$

\vspace{0.05in} 
\item[{\rm (6)}]
Double limit cycle bifurcation occurs from a critical surface,
which is tangent to the Hopf bifurcation surface ${\rm H}$ on
the critical curve ${\rm GH}$, and tangent to the homoclinic bifurcation
surface ${\rm HL}$ on the critical curve ${\rm DHL}$.
\end{enumerate}
\end{theorem}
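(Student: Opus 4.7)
The plan is to build the proof directly on the PSNF \eqref{Eqn59}, treating the six bifurcation loci as successively more degenerate conditions on the same normal form and then using the diffeomorphism guaranteed by \eqref{Eqn60} to pull everything back to the original parameters $(k,\varepsilon,n)$. First I would locate the equilibria $\mathrm{E}_{\pm}=(\pm\sqrt{-\beta_1},0)$ from \eqref{Eqn61} and read off the Jacobian \eqref{Eqn62}; since $\det J_{\pm}=-2y_{1\pm}$, the two equilibria collide exactly when $\beta_1=0$, which delivers the saddle-node surface $\mathrm{SN}$, and gives $\mathrm{E}_+$ as a saddle and $\mathrm{E}_-$ as focus/node for $\beta_1<0$. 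The Hopf surface $\mathrm{H}$ then comes from setting $\operatorname{tr} J_-=0$, i.e.\ $\beta_2+\beta_3 y_{1-}+y_{1-}^3=0$ with $y_{1-}=-\sqrt{-\beta_1}$, which rearranges to $\beta_2=(\beta_3-\beta_1)\sqrt{-\beta_1}$; existence of a genuine Hopf (rather than a degenerate one) follows from the nondegeneracy of the transversality, which I would verify by differentiating the trace in $\beta_2$ along $\mathrm{H}$.

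Next I would compute the first two focus values on $\mathrm{H}$ via the standard Maple normal-form routine cited in \cite{Yu1998}, obtaining $v_1=(\beta_3+3\beta_1)/(16\sqrt{-\beta_1})$ and $v_2|_{v_1=0}=5/(96\sqrt{-\beta_1})>0$. The curve $\mathrm{GH}$ is then the intersection of $\mathrm{H}$ with $\{v_1=0\}$: substitute $\beta_3=-3\beta_1$ into the Hopf equation to get $\beta_2=-4\beta_1\sqrt{-\beta_1}$, and the sign of $v_2$ guarantees codimension two along this curve with an outer unstable and inner stable limit cycle. For the homoclinic loci I would apply the Dumortier-Roussarie rescaling \eqref{Eqn66}--\eqref{Eqn67}, which converts \eqref{Eqn59} into the perturbed Hamiltonian system \eqref{Eqn68} whose unperturbed Hamiltonian \eqref{Eqn71} admits the explicit homoclinic loop $\Gamma_0$. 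The Melnikov function along $\Gamma_h$ expands as in \eqref{Eqn74} with leading coefficients $C_0(\nu)$ and $C_1(\nu)$; I would evaluate $C_0$ by inserting the explicit parametrization \eqref{Eqn77} into \eqref{Eqn75}, carrying out elementary $\operatorname{sech}$-$\tanh$ integrals (using $\int\operatorname{sech}^4\tanh^2\,dt$, $\int\operatorname{sech}^6\tanh^2\,dt$, etc.) to obtain \eqref{Eqn78}, and pulling back to the $\beta$-variables via \eqref{Eqn79}.

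The homoclinic loop surface $\mathrm{HL}$ is then $\{C_0(\beta)=0\}$, giving $\beta_2=(\tfrac57\beta_3-\tfrac{103}{77}\beta_1)\sqrt{-\beta_1}$; the degenerate homoclinic curve $\mathrm{DHL}$ is the codimension-two locus $\{C_0=C_1=0\}$, and solving the linear $2\times 2$ system in $(\beta_2,\beta_3)$ with $\sqrt{-\beta_1}$ as a factor yields $\beta_3=\tfrac{15}{11}\beta_1$ and $\beta_2=-\tfrac{4}{11}\beta_1\sqrt{-\beta_1}$. For the final double limit cycle surface $\mathrm{DLC}$ I would argue as in \cite{Dumortier1987,YuZhang2019}: on the region bounded below by $\mathrm{H}$ and above by $\mathrm{HL}$ there are two limit cycles for parameters near $\mathrm{GH}$ and near $\mathrm{DHL}$; these cycles coalesce on a smooth surface emanating from both $\mathrm{GH}$ and $\mathrm{DHL}$, and the tangency claim follows because a double limit cycle degenerates either into a weak focus (joining $\mathrm{H}$ tangentially at $\mathrm{GH}$) or into a tangent homoclinic loop (joining $\mathrm{HL}$ tangentially at $\mathrm{DHL}$). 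Finally, the diffeomorphism \eqref{Eqn60} between $(\mu_1,\mu_2,\mu_3)$ and $(\beta_1,\beta_2,\beta_3)$ transports every surface/curve back to the physical parameters, completing the theorem.

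The main obstacle I expect is the $C_0(\nu)$ computation: the coefficients $\tfrac57$ and $\tfrac{103}{77}$ are delicate, so I would be careful to include the cubic nonlinearity $z_1^3 z_2$ (which produces the $\tfrac{103}{77}\bar\nu_1^3$ term via a nontrivial $\operatorname{sech}^{10}$ integral) and to track the sign and normalization introduced by the rescaling \eqref{Eqn66}--\eqref{Eqn67}. A secondary subtlety is justifying the existence and smoothness of $\mathrm{DLC}$ as a surface rather than just a set; this rests on the fact that $v_2>0$ and on higher-order non-vanishing in the Melnikov expansion at $\mathrm{DHL}$, both of which I would verify explicitly.
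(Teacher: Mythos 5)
Your proposal is correct and follows essentially the same route as the paper: linearization of the PSNF \eqref{Eqn59} at ${\rm E_\pm}$ for the SN and H loci, the focus values $v_1,v_2$ for GH, the Dumortier--Roussarie rescaling \eqref{Eqn66}--\eqref{Eqn67} and the Melnikov coefficients $C_0,C_1$ for HL and DHL (your elimination giving $\beta_3=\tfrac{15}{11}\beta_1$, $\beta_2=-\tfrac{4}{11}\beta_1\sqrt{-\beta_1}$ matches the paper's), and the standard tangency argument for the double limit cycle surface. The one point worth flagging is that the pull-back to the original parameters uses the Jacobian nondegeneracy \eqref{Eqn60} of the map $(\beta_1,\beta_2,\beta_3)\mapsto(\mu_1,\mu_2,\mu_3)$ exactly as you describe, so nothing is missing.
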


\begin{figure}[!h]
\vspace*{-0.60in}
\begin{center} 
\hspace*{-0.90in}
\begin{overpic}[width=1.26\textwidth,height=1.143\textheight]{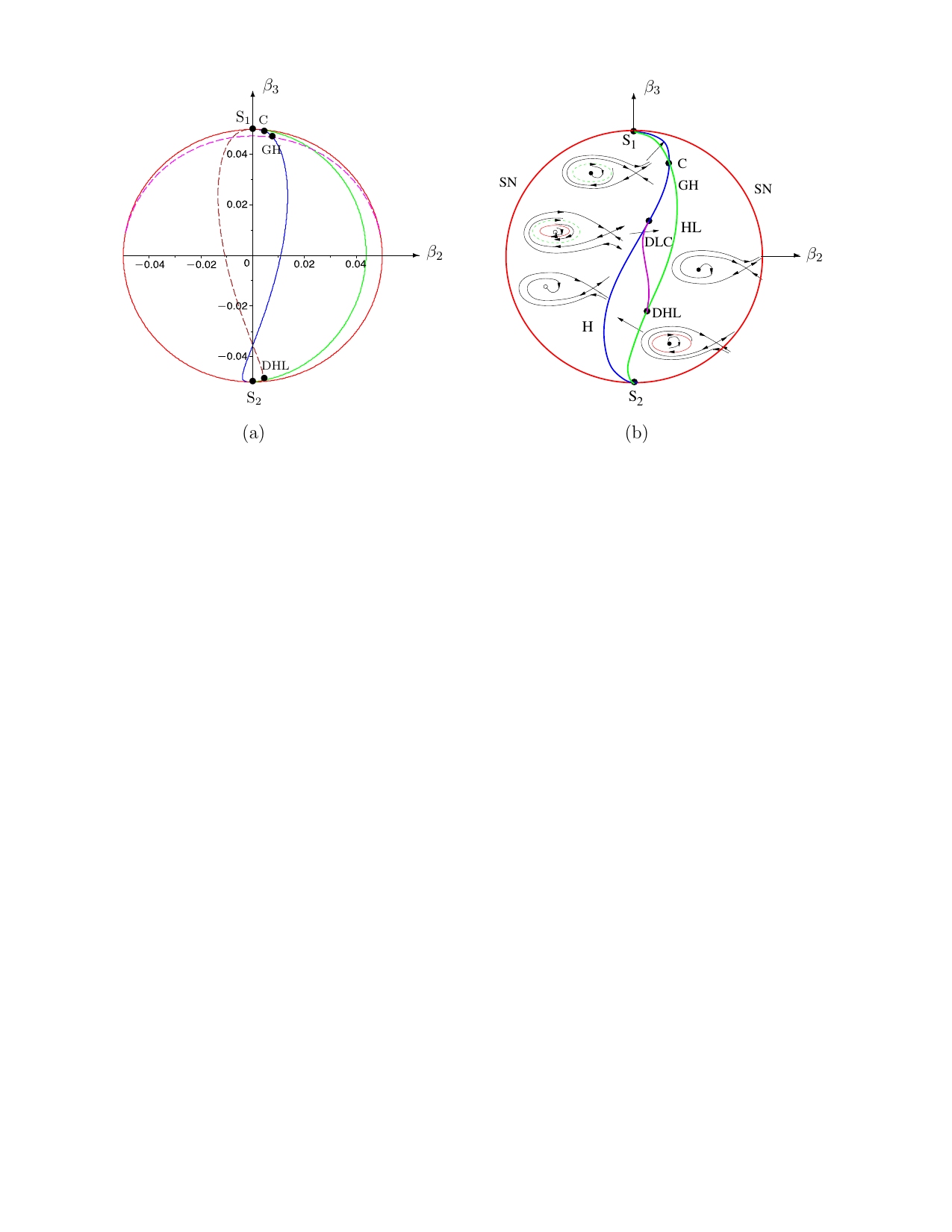}
\end{overpic}

\vspace{-6.20in}
\caption{Bifurcation diagram for the codimension-$3$ B-T bifurcation
based on the normal form \eqref{Eqn47}, 
displayed in the intersection of the cone and the
$2$-sphere $\beta_1^2 + \beta_2^2 + \beta_3^2 = \sigma^2$,
with the red color curve for saddle-node, blue curve for Hopf and green
curve for homoclinic loop bifurcations, respectively: 
(a) with $\sigma \!=\! 0.05$, where the intersection point of the pink and 
blue curves is the degenerate Hopf bifurcation, and the intersection point
of the brown and green curves denotes the degenerate homoclinic
loop bifurcation; and (b) a schematic bifurcation diagram, 
where the GH and DHL represent the generalized Hopf critical point and 
the degenerate homoclinic critical point, respectively.}
\label{fig9} 
\end{center}

\vspace*{-0.30in}
\vspace*{-2.20in}
\begin{center} 
\hspace*{-0.85in}
\begin{overpic}[width=1.250\textwidth,height=1.157\textheight]{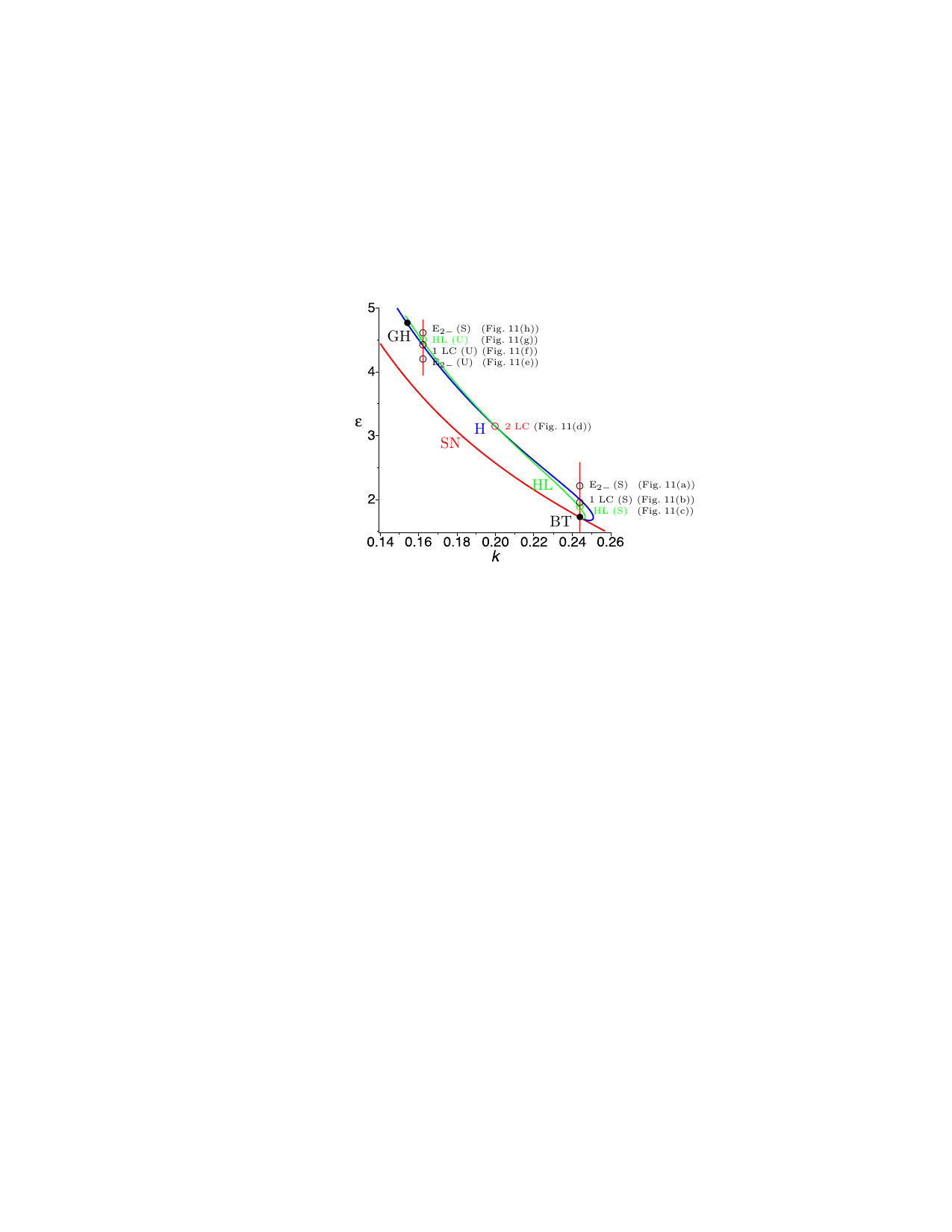}
\end{overpic}

\vspace{-5.35in}

\caption{Bifurcation diagram for the codimension-$3$ B-T 
bifurcation of the epidemic model \eqref{Eqn8} with $m \!=\!2$, 
$n=\frac{5}{12}$, in the parameter $k$-$\varepsilon$ space, 
with the red, blue and green curves (obtained from 
numerical computation) representing the saddle-node (SN), 
Hopf (H) and homoclinic loop (HL) bifurcations, respectively, 
and the blank circles indicate the parameter values for simulations, 
which are given in Figure~\ref{fig11}.} 
\label{fig10}
\end{center} 
\vspace{-0.20in} 
\end{figure}

The bifurcation diagram projected on 
a $2$-sphere is shown in Figure~\ref{fig9}.
Figure~\ref{fig9}(a) is an exact bifurcation diagram for
$\sigma=0.05$, in which the intersection points 
C, GH and DHL, as shown in Figure~\ref{fig9}(a), are given by 
$$
\begin{array}{rlll}
{\rm C} & = \ (\beta_2,\beta_3)_{\rm C}
&\!\!\!\! = (0.001880,\hspace*{0.15in}0.049947), & \ \ {\rm for} \ \
\beta_1 = -\,0.001343, \\[1.0ex]
{\rm GH} \!\! & = \, (\beta_2,\beta_3)_{\rm GH} &\!\!\!\! 
= (0.007807,\hspace*{0.15in} 0.046852),
& \ \ {\rm for} \ \ \beta_1 = -\,0.015617, \\[1.0ex]
{\rm DHL} \!\!\!\! & = (\beta_2,\beta_3)_{\rm DHL}
&\!\!\!\! = (0.003499,-\,0.049424), & \ \
{\rm for} \ \ \beta_1 = -\,0.006712. 
\end{array} 
$$

For a better view of bifurcations, a schematic 
general bifurcation diagram is shown in Fig.~\ref{fig9}(b) 
with typical phase portraits, which is similar to 
Figure 3 in \cite{Dumortier1987} and Figure 2 in \cite{LiLiMa2015}.

\begin{figure}[!t]
\vspace*{-2.70in}
\begin{center} 
\hspace*{-1.24in}
\begin{overpic}[width=1.355\textwidth,height=1.277\textheight]{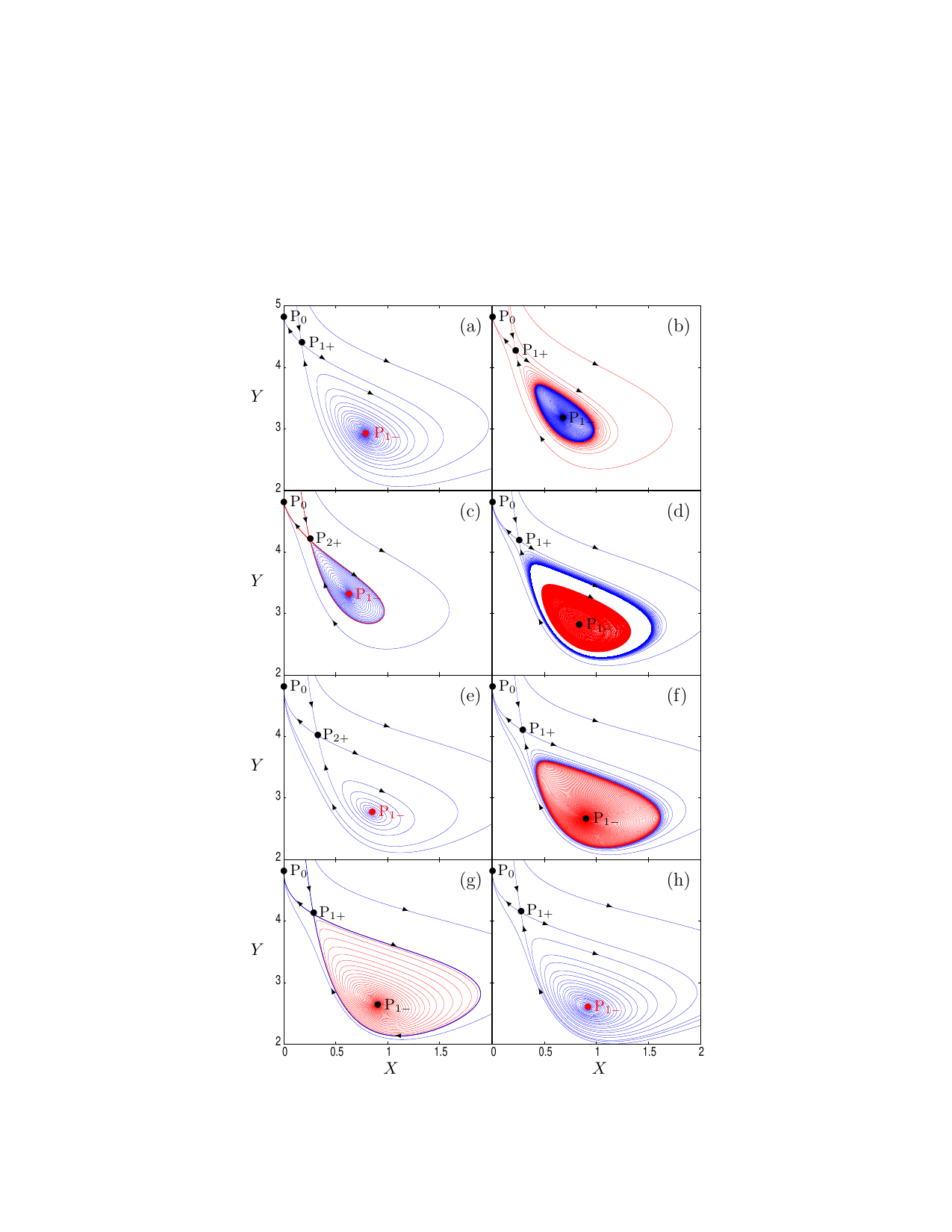}
\end{overpic}

\vspace{-1.40in}
\caption{Simulated trajectories for the epidemic model \eqref{Eqn8} with 
$m\!=\!2$, $n\!=\!\frac{5}{12}$, 
showing the bifurcations given in Figure~\ref{fig10}:
(a) the stable ${\rm P_{1-}}$ for 
$(k,\varepsilon)=(0.2439,2.2)$; (b) a stable LC 
for $(k,\varepsilon)=(0.2439,1.95)$; 
(c) the stable HL for $(k,\varepsilon)=(0.2439,1.871268)$; 
(d) 2 LC for $(k,\varepsilon)=(0.2,3.13)$;
(e) the unstable ${\rm E_{2-}}$ for $(k,\varepsilon)=(0.16202,4.2)$; 
(f) an unstable LC for $(k,\varepsilon)=(0.16202,4.44)$;
(g) the unstable HL for $(k,\varepsilon)=(0.16202,4.485125)$; and 
(h) the stable ${\rm E_{2-}}$ for $(k,\varepsilon)=(0.16202,4.6)$, 
where LC and HL represent limit cycle and homoclinic loop, respectively.}  
\label{fig11}  
\vspace*{-0.30in} 
\end{center}
\end{figure}

Finally, we present the simulation for the codimension-$3$ B-T bifurcation 
to show the dynamics described in Theorem~\ref{Thm3.3}. 
For convenience, we use the model \eqref{Eqn8} to plot the 
bifurcation diagram in the $k$-$\varepsilon$ plane. For this purpose,   
we take $m=2$, and $\mu_3 = -\frac{1}{12}$, yielding 
$$
n= n_0 + \mu_3 = \frac{1}{2}-\frac{1}{12}=\frac{5}{12}. 
\vspace{0.05in} 
$$
Then, the bifurcation diagram plotted in the $k$-$\varepsilon$ space, 
is shown in Figure~\ref{fig10}, where the red, 
blue and green curves represent the saddle-node, Hopf and homoclinic loop 
bifurcations, respectively. The green curve for the homoclinic loop  
bifurcation is obtained from numerical computation. 
It is seen that the bifurcation diagram in Figure~\ref{fig10} 
agrees well with those in Figure~\ref{fig9}, but in a reflection matter, 
namely, the stable equilibrium ${\rm E_{2-}}$ in Figure~\ref{fig10} 
appears on the right side of the Hopf bifurcation curve, while it is 
on the left side of Hopf bifurcation curve in Figure~\ref{fig9}. 
The eight blank circles in Figure~\ref{fig10} indicate the points of 
$(k,\varepsilon)$ parameter values for simulation, 
which, except for the red circle point yielding $2$ limit cycles, 
are located on the two lines: $ k = 0.16202 $ and $ k=0.2439$. 
Note that the red circle point, $(k,\varepsilon) 
= (0.2,3.13)$ for the $2$ limit cycles, is below both the blue (H) 
and green (HL) curves, since at $k=0.2$, $\varepsilon = 3.14402$ 
and $\varepsilon = 3.14519$ on the Hopf and homoclinic loop curves, 
respectively. The two green circles (on the green curves) denote the 
two homoclinic loop bifurcations, with the right one (on the 
line $k=0.2439$) stable and the left one (on the line $k=0.16202$) 
unstable. Therefore, starting from the points on 
the line $k=0.2439$ (in the downward direction) to the $2$-LC point, 
and then to the points on the line $k=0.16202$ (in the upward direction), 
we obtain the corresponding simulation figures depicted in Figure~\ref{fig11}, 
as indicated in Figure~\ref{fig10},  
which indeed, with a careful selection of the parameter values, 
demonstrates the complex bifurcation behaviours around the 
codimension-$3$ B-T bifurcation point.


\section{Conclusion}

In this paper, we have studied Hopf and Bogdanov-Takens bifurcations 
and paid particular attention to the codimension of the two bifurcations 
as well as to the dynamical behaviours around the bifurcation points. 
We have used an epidemic model to illustrate how to determine 
the codimension of Hopf and Bogdanov-Takens bifurcations. 
It has been shown that 
the difficulty mainly comes from the restriction on the system parameters. 
Moreover, for the codimension-$3$ Bogdanov-Takens bifurcation, we have 
introduced the one-step transformation approach, showing the advantage 
of this method compared to the classical six-step transformation approach.  
Numerical simulations are presented to show an excellent agreement with
the theoretical predictions.

\section*{Acknowledgement}

This research was partially supported by the 
Natural Sciences and Engineering Research Council of Canada
(NSERC No.~R2686A02).

\vspace*{0.30in}

\end{document}